\theoremstyle{theorem}
\newtheorem {theo}{Theorem}[section]
\newtheorem*{theo*}{Theorem}
\newtheorem {lemme}[theo]{Lemma}
\newtheorem*{lemme*}{Lemma}
\newtheorem*{prop*}{Proposition}
\newtheorem*{cor*}{Corollary}
\newtheorem*{cor_proof*}{Corollary (of the proof)}
\newtheorem*{conjecture*}{Conjecture}
\theoremstyle{definition}
\newtheorem {defi}[theo]{Definition}
\newtheorem*{defi*}{Definition}
\newtheorem*{nota*}{Notation}
\theoremstyle{remark}
\newtheorem {remarque}[theo]{Remark}
\newtheorem*{remarque*}{Remark}
\newtheorem*{warning*}{Warning}
\newtheorem*{remarques*}{Remarks}
\newtheorem {warnings}[theo]{Warnings}
\newtheorem*{warnings*}{Warnings}
\newtheorem*{convention*}{Convention}
\newtheorem {exemple}[theo]{Example}
\newtheorem*{exemple*}{Example}
\newtheorem*{exemples*}{Examples}
\newtheorem {question}[theo]{Question}
\newtheorem*{question*}{Question}
\newtheorem*{questions*}{Questions}
\newtheorem*{fact*}{Fact}
\newtheorem*{acknowledgments}{Acknowledgments}
\def\DD{{\mathcal D}}
\def\N{{\mathds N}}
\def\Z{{\mathds Z}}
\def\p{\partial}
\def\UnN{\{1,\cdots,n\}}
\def\AA{\mathcal{A}}
\def\ie{{\it i.e. }}
\def\pcup{\operatornamewithlimits{\cup}\limits}
\def\psqcup{\operatornamewithlimits{\sqcup}\limits}
\def\fract#1/#2{\hbox{\leavevmode
  \kern.1em \raise .25ex \hbox{\the\scriptfont0 $#1$}\kern-.1em }\big/
  {\hbox{\kern-.15em \lower .5ex \hbox{\the\scriptfont0 $#2$}} }}
\def\fractt#1/#2{\hbox{\leavevmode
  \kern.1em \raise .25ex \hbox{\the\scriptfont0 $#1$}\kern-.1em
}\lower .2ex\hbox{\Big/}
  {\hbox{\kern-.15em \lower .8ex \hbox{\the\scriptfont0 $#2$}} }}
\def\subfract#1/#2{\hbox{\leavevmode
  \kern.1em \raise .25ex \hbox{\the\scriptfont0 \scriptsize $#1$}\kern-.1em }/
  {\hbox{\kern-.15em \lower .5ex \hbox{\the\scriptfont0 \scriptsize $#2$}} }}
\newcommand{\noi}{\noindent}
\newcommand{\dessin}[2]{
  \vcenter{\hbox{\includegraphics[height=#1]{#2.pdf}}}}
\def\sv{\textrm{sv}}
\def\cc{\textrm{sc}}
\def\TC{\textrm{TC}}
\def\OC{\textrm{OC}}
\def\UC{\textrm{UC}}
\def\SL{\textrm{SL}}
\def\SLn{\SL_n}
\def\SLhn{\SLn^{\cc}}
\def\P{\textrm{P}}
\def\Ph{\P^{\cc}}
\def\Pn{\P_n}
\def\Phn{\Pn^{\cc}}
\def\wSL{\textrm{wSL}}
\def\wSLH{\wSL^{\sv}}
\def\wSLh{\wSL^{\cc}}
\def\wSLn{\wSL_n}
\def\wSLHn{\wSLn^{\sv}}
\def\wSLhn{\wSLn^{\cc}}
\def\wP{\textrm{wP}}
\def\wPH{\wP^{\sv}}
\def\wPh{\wP^{\cc}}
\def\wPn{\wP_n}
\def\wPHn{\wPn^{\sv}}
\def\wPhn{\wPn^{\cc}}
\def\vSL{\textrm{vSL}}
\def\vSLH{\vSL^{\sv}}
\def\vSLh{\vSL^{\cc}}
\def\vSLn{\vSL_n}
\def\vSLHn{\vSLn^{\sv}}
\def\vSLhn{\vSLn^{\cc}}
\def\vP{\textrm{vP}}
\def\vPH{\vP^{\sv}}
\def\vPh{\vP^{\cc}}
\def\vPn{\vP_n}
\def\vPHn{\vPn^{\sv}}
\def\vPhn{\vPn^{\cc}}
\def\F{\textrm{F}}
\def\Fn{{\F_n}}
\def\RF{\textrm{RF}}
\def\RFn{{\RF_n}}
\def\Aut{\textrm{Aut}}
\def\AutC{\Aut_{\textrm{C}}}
\def\DDsize{.9cm}
\def\DDAa{\dessin{\DDsize}{DDAa}}
\def\DDAb{\dessin{\DDsize}{DDAb}}
\def\DDBa{\dessin{\DDsize}{DDBa}}
\def\DDCa{\dessin{\DDsize}{DDCa}}
\def\DDDa{\dessin{\DDsize}{DDDa}}
\def\DDCb{\dessin{\DDsize}{DDCb}}
\def\DDDb{\dessin{\DDsize}{DDDb}}
\def\DDBc{\dessin{\DDsize}{DDBc}}
\def\DDCc{\dessin{\DDsize}{DDCc}}
\def\DDDc{\dessin{\DDsize}{DDDc}}
\def\DDCd{\dessin{\DDsize}{DDCd}}
\def\DDDf{\dessin{\DDsize}{DDDf}}
\def\DDEa{\dessin{\DDsize}{DDEa}}
\def\DDEb{\dessin{\DDsize}{DDEb}}
\def\DDEc{\dessin{\DDsize}{DDEc}}
\def\VTO{\dessin{\DDsize}{V21}}
\def\VT{\dessin{\DDsize}{V22}}
\def\VTU{\dessin{\DDsize}{V21_1}}
\def\VTD{\dessin{\DDsize}{V21_2}}
\def\VTT{\dessin{\DDsize}{V21_3}}
\begin{document}

\title[Usual, Virtual and Welded knotted objects up to homotopy]{On Usual, Virtual and Welded knotted objects up to homotopy}
\author[B. Audoux]{Benjamin Audoux}
         \address{Aix Marseille Universit\'e, I2M, UMR 7373, 13453 Marseille, France}
         \email{benjamin.audoux@univ-amu.fr}
\author[P. Bellingeri]{Paolo Bellingeri}
         \address{Universit\'e de Caen, LMNO, 14032 Caen, France}
         \email{paolo.bellingeri@unicaen.fr}
\author[J.B. Meilhan]{Jean-Baptiste Meilhan}
         \address{Universit\'e Grenoble Alpes, IF, 38000 Grenoble, France}
         \email{jean-baptiste.meilhan@ujf-grenoble.fr}
\author[E. Wagner]{Emmanuel Wagner}
         \address{IMB UMR5584, CNRS, Univ. Bourgogne Franche-Comt\'e, F-21000 Dijon, France}
         \email{emmanuel.wagner@u-bourgogne.fr}

\subjclass[2010]{Primary 57M25, 57M27; Secondary  20F36}

\keywords{braids, string links, virtual and welded knot theory, link homotopy, self-virtualization, Gauss diagrams}


\begin{abstract}
We consider several classes of knotted objects, namely usual, virtual and welded pure braids and string links, 
and two equivalence relations on those objects, induced by either self-crossing changes or self-virtualizations. 
We provide a number of results which point out the differences between these various notions. 
The proofs are mainly based on the techniques of Gauss diagram formulae.
\end{abstract}

\maketitle

\section{Introduction}

In this note, we study several variations of the notions of pure braids and string links. 
Recall that string links are pure tangles without closed components, which form a monoid that contains the pure braid group as the group of units. 
As usual in knot theory, these objects can be regarded as diagrams up to Reidemeister moves. 
When allowing virtual crossings in such diagrams, modulo a suitably extended set of Reidemeister moves, one defines 
the notions of virtual pure braids and virtual string links. 
Another related class of object is that of welded knotted objects.
Welded knots are a natural quotient of virtual knots, by the so-called Overcrossings Commute relation, which is one of the two forbidden moves in virtual knot theory.
What makes this Overcrossings Commute relation natural is that the virtual knot group, and hence any virtual knot invariant derived from it, factors through it.
These welded knotted objects first appeared in a work of Fenn-Rimanyi-Rourke in the more algebraic context of braids \cite{FRR}.
The study of these three classes (usual, virtual and welded) of knotted objects is currently the subject of an ongoing project of Bar-Natan and Dancso \cite{WKO1,WKO2,Hoo},
which aims at relating certain algebraic structures to the finite type theories for these objects.

Works of Habegger and Lin \cite{HL} show that, in the usual case, any string link is link-homotopic to a pure braid, 
and that string links are completely classified up to link-homotopy by their action on the reduced free group $\RFn$. 
Here, the link-homotopy is the equivalence relation on knotted object generated by self-crossing changes, and
the reduced free group is the smallest quotient of the free group $\Fn$ where each generator commutes with all of its conjugates.  \\
In \cite{vaskho}, the authors gave welded analogues of these results, recalled in Theorems \ref{th:wSLh=wPh} and \ref{th:wSLh=AutC} below. 
There, it appears that the right analogue of link-homotopy in the virtual/welded setting is the notion of $\sv$-equivalence, which is the 
equivalence relation on virtual knotted objects generated by self-virtualization, i.e. replacement of a classical self crossing by a virtual one. 

This note contains a series of results which analyse further the various quotients of 
usual, virtual and welded pure braids and string links up to link-homotopy and $\sv$-equivalence, and the relations between them. 
The summary of our results, stated and proved in Section \ref{sec:miscellaneous}, is given in Figure \ref{fig:Connections} below. 
\begin{figure}[h]
\[
\vcenter{\hbox{\xymatrix@!0@C=1.6cm@R=1.6cm{
\AutC^0(\Fn)\ar@{<<-^{)}}[rr]_{\mbox{\scriptsize\cite{artin}}} \ar@{^{(}->}[dddd]\ar@{}[dddd]|{\ncong}&&
\Pn \ar@{->>}[rr]\ar@{}[rr]|{(\ncong)}_{\mbox{\scriptsize\cite{Gold}}} \ar@{^{(}->}[rd]\ar@{}[rd]|(.45){\ncong} \ar@{^{(}->}[dd] \ar@{}[dd]|(.4){\ncong}_{\mbox{\scriptsize\cite{FRR}}}
&&
\Phn \ar@{^{(}->>}[rrrr]|{}="M2" \ar@{^{(}->>}[rd]|(.45){}="M1" \ar[dd]|!{[dl];[dr]}\hole \ar@{}[d]|{\ncong}&&&&
\AutC^0(\RFn)\ar@{}[]|{}="FIN" \ar@{} "M1";"FIN"|(.25){\mbox{\scriptsize\cite{HL}}}="CENTRE"\\
&&& \SLn \ar@{->>}[rr]\ar@{}[r]|{\ncong} \ar@{^{(}->}[dd] \ar@{}[d]|{\ncong}&&
\SLhn \ar@{^{(}->>}[urrr]|{}="M3" \ar[dd] \ar@{}[d]|{\ncong}&&& \ar@{.}
"CENTRE";"M1" \ar@{.} "CENTRE";"M2" \ar@{.} "CENTRE";"M3"\\
&& \vPn
\ar@{->>}[rr]|!{[ur];[dr]}\hole \ar@{}[r]|{\ncong}^{\ref{prop:wPh}} \ar@{->}[dr]\ar@{}[dr]
\ar@{}[]|{}="AAa" \ar@{->>}[dd]
\ar@{}[dd]|{(\ncong)}_{\ref{prop:WeldedGenuine}\,\,} &&
\vPhn \ar@{->>}[rr]|!{[ur];[dr]}\hole \ar@{}[r]|{\ncong}^{\ref{purebraids}}
\ar@{->>}[dd]|!{[dl];[dr]}\hole \ar@{}[d]|{(\ncong)}_{\ref{prop:WeldedGenuine}\,\,} \ar@{^{(}->}[rd] \ar@{}[dr]|(.4){\ncong}_(.45){\ref{prop:PSLh}} 
&&
\vPHn \ar@{->>}[dd]|!{[dl];[dr]}\hole \ar@{}[d]|{(\ncong)}_{\ref{prop:WeldedGenuine} \,\,}\ar@{^{(}->}[rd] && \\
&&& \vSLn \ar@{->>}[rr] \ar@{->>}[dd] \ar@{}[d]|(.6){\ncong}^(.6){\ref{lem:vwSL}}
\ar@{}[r]|{\ncong}^{\ref{prop:wSLh}}
&&
\vSLhn \ar@{->>}[rr]\ar@{}[r]|(.6){\ncong}^(.6){\ref{prop:wSLhH}} \ar@{->>}[dd]\ar@{}[d]|{\ncong}^{\ref{rem:knottedness}}
&&
\vSLHn \ar@{->>}[dd] \ar@{}[d]|{(\ncong)}^{\,\, \ref{prop:WeldedGenuine} }& \\
\AutC(\Fn) \ar@{<<-^{)}}[rr]_{\mbox{\scriptsize\cite{WKO1}}}&&\wPn
\ar@{->>}[rr]|!{[ur];[dr]}\hole \ar@{}[r]|{\ncong}^{\ref{prop:wPh}} \ar@{^{(}->}[dr]\ar@{}[dr]|(.4){\ncong}_(.45){\ref{rem:wTBW}} &&
\wPhn \ar@{->>}[rr]|!{[ur];[dr]}\hole \ar@{}[r]|{\ncong}^{\ref{purebraids}} \ar@{^{(}->}[dr]
\ar@{}[dr]|(.4){\ncong}_(.45){\ref{prop:PSLh}} &&
\wPHn \ar@{^{(}->>}[rr]|!{[ur];[dr]}\hole \ar@{^{(}->>}[dr]\ar@{}[dr]^(.45){\ref{th:wSLh=wPh}} &&
\AutC(\RFn) \ar@{<-_{)}}[uuuu] \ar@{}[uuuu]|{\ncong}\\
&&& \wSLn \ar@{->>}[rr] \ar@{}[r]|(.6){\ncong}^(.6){\ref{prop:wSLh}}&&
\wSLhn \ar@{->>}[rr] \ar@{}[r]|(.6){\ncong}^(.6){\ref{prop:wSLhH}} &&
\wSLHn \ar@{^{(}->>}[ur]\ar@{}[ur]^{\ref{th:wSLh=AutC}} &
}}}
\]
\caption{Connections between usual/virtual/welded pure braids and string links.\\
  {\footnotesize All statements hold for $n\geq2$ except for the 
    ``$(\ncong)$", which become isomorphisms for $n=2$}}
\label{fig:Connections}
\end{figure}
Although all notation and definitions needed for this diagram will be given in Section \ref{sec:defi}, let us outline here that  
\begin{itemize}
 \item $\Pn$ and $\SLn$ stand for the (usual) sets of pure braids and string links on $n$ strands, 
       and the prefix v and w refer to their virtual and welded counterpart, respectively; 
 \item the superscripts $\sv$ and $\cc$ refer respectively to the equivalence relations generated by self-virtualization and self-crossing change. 
\end{itemize}

\begin{acknowledgments}
We wish to thank Dror Bar-Natan and Arnaud Mortier for numerous fruitful discussions. 
We also thank Anne Isabel Gaudreau and Akira Yasuhara for their comments on the first version of this paper. 
Thanks are also due to the anonymous referee for his/her careful reading of the manuscript.  
This work is supported by the French ANR research project ``VasKho'' ANR-11-JS01-00201. 
\end{acknowledgments}

\section{Definitions}\label{sec:defi} 

Before we define below the main objects of this note, let us fix a few notation that will be used throughout. 

We set $n$ to be a non negative integer, and we denote by $\UnN$ the set of integers between $1$ and $n$. 
Denote by $I$ the unit closed interval. We fix $n$ distinct points $\{p_i\}_{i\in\UnN}$ in $I$.   
We will also use the following algebraic notation.
Let $\Fn$ be the free group on $n$ generators $x_1,\ldots,x_n$.
We denote by $\RFn$ the quotient group of $\Fn$ by the normal closure of the subgroup generated by elements of the form $[x_i,g^{-1} x_i g]$, where $i\in \{1,\cdots,n\}$ and $g\in \Fn$;  
this quotient is called the \emph{reduced free group} on $n$ generators. 

Let $G$ be either $\Fn$ or $\RFn$ with its chosen system of generators; we define
   \begin{itemize}
  \item $\AutC(G):=\big\{f\in\Aut(G)\ \big|\ \forall
    i\in\{1,\cdots,n\},\exists g\in G,f(x_i)=g^{-1} x_i g\big\}$, the group of \emph{basis-conjugating automorphisms} of $G$; 
  \item $\AutC^0(G):=\big\{f\in\AutC(G)\ \big|\ f(x_1\cdots x_n)=x_1\cdots x_n\big\}$.
  \end{itemize}

\subsection{Usual, virtual and welded knotted objects}

In this section, we introduce the main objects of this note. 

\begin{defi}\label{def:StringLink}
An $n$-component \emph{virtual string link diagram} is an immersion $L$ of $n$ oriented intervals $\psqcup_{i=1}^n I_i$ in $I\times I$, called \emph{strands}, such that:
\begin{itemize}
\item each strand $I_i$ has boundary $\p I_i=\{p_i\}\times\{0,1\}$ and  is oriented from $\{p_i\}\times\{0\}$ to $\{p_i\}\times\{1\}$ ($i\in\UnN$);
\item the singular set of $L$ is a finite set of transverse double points.
\item a decoration is added at each double point, and the decorated double point is called either \emph{a classical crossing} or \emph{a virtual crossing}, as indicated in Figure \ref{fig:crossings}. 
\end{itemize}
A classical crossing where the two preimages belong to the same component is called a \emph{self-crossing}.   
\end{defi}

Up to isotopy, the set of virtual string link diagrams is naturally endowed with a monoidal structure by the stacking product, 
and with unit element the trivial diagram $\pcup_{i=1}^n p_i\times I$. 

\begin{figure}[!h]
\[
\begin{array}{ccc}
  \dessin{2cm}{VirtCr}&\hspace{1cm}&\vcenter{\hbox{\xymatrix@R=-1cm{&\dessin{1.6cm}{PosCr}\
        \textrm{\ : positive}\\\dessin{2cm}{ClasCr}\ar[ur]\ar[dr]&\\&\dessin{1.6cm}{NegCr}\
        \textrm{\ : negative}}}}\\
  \textrm{virtual}&&\textrm{classical}\hspace{1.5cm}
\end{array}
\]
 \caption{Virtual and  classical crossings} \label{fig:crossings}
\end{figure}
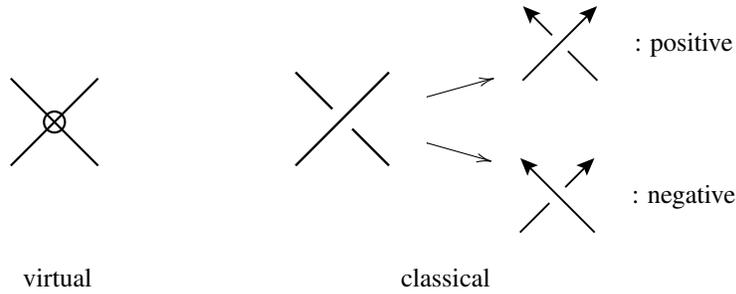

Two virtual string link diagrams are equivalent if they are related by a finite sequence of  \emph{generalized Reidemeister moves}, represented in  Figure \ref{fig:AllReidMoves}. 
As is well known, virtual and mixed Reidemeister moves imply the more general \emph{detour move}, which replaces an arc going through only virtual crossings by any other such arc, fixing the boundary \cite{Kauffman}.   
  
\begin{figure}[!h]
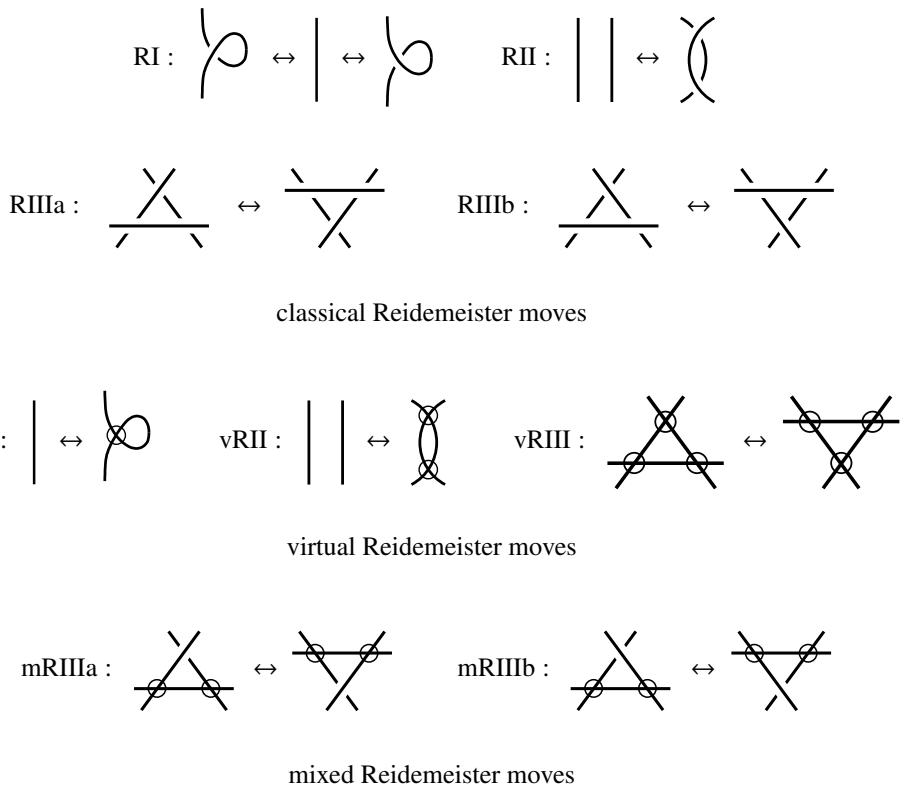

  \[
  \begin{array}{c}
    \begin{array}{c}
       \textrm{RI}:\ \dessin{1.5cm}{ReidI_2}
    \leftrightarrow \dessin{1.5cm}{ReidI_1} \leftrightarrow \dessin{1.5cm}{ReidI_3}
\hspace{.7cm}
\textrm{RII}:\ \dessin{1.5cm}{ReidII_1} \leftrightarrow \dessin{1.5cm}{ReidII_2}\\[1cm]
\textrm{RIIIa}:\ \dessin{1.5cm}{ReidIIIa_1} \leftrightarrow \dessin{1.5cm}{ReidIIIa_2}
\hspace{.7cm}
\textrm{RIIIb}:\ \dessin{1.5cm}{ReidIIIb_1} \leftrightarrow \dessin{1.5cm}{ReidIIIb_2}
\end{array}\\[2cm]
    \textrm{classical  Reidemeister moves}
  \end{array}
  \]
\vspace{.5cm}
  \[
     \begin{array}{c}
       \textrm{vRI}:\ \dessin{1.5cm}{vReidI_1}
    \leftrightarrow \dessin{1.5cm}{vReidI_2}
\hspace{.7cm}
\textrm{vRII}:\ \dessin{1.5cm}{vReidII_1} \leftrightarrow \dessin{1.5cm}{vReidII_2}
\hspace{.7cm}
\textrm{vRIII}:\ \dessin{1.5cm}{vReidIII_1} \leftrightarrow \dessin{1.5cm}{vReidIII_2}\\[1cm]
    \textrm{virtual Reidemeister  moves}
  \end{array}
   \]
\vspace{.5cm}
  \[
      \begin{array}{c}
       \textrm{mRIIIa}:\ \dessin{1.5cm}{mReidIIIa_1}
    \leftrightarrow \dessin{1.5cm}{mReidIIIa_2}
\hspace{.7cm}
\textrm{mRIIIb}:\ \dessin{1.5cm}{mReidIIIb_1} \leftrightarrow \dessin{1.5cm}{mReidIIIb_2}\\[1cm]
    \textrm{mixed Reidemeister  moves}
  \end{array}
  \]
  \caption{Generalized Reidemeister moves on diagrams; here, all lines are pieces of strands which may belong to the same strand or not, and can have any orientation.}  \label{fig:AllReidMoves}
\end{figure}

We denote by $\vSLn$ the quotient of $n$-component virtual string link diagrams up to isotopy and generalized Reidemeister moves, which is a monoid with composition induced by the stacking product. We call its elements $n$-component \emph{virtual string links}.
\begin{remarque}
In the literature, $1$-component (virtual) string links were previously
studied under the name of (virtual) long knots. We shall sometimes make
use of this terminology.
\end{remarque}
\begin{defi}\label{def:MonotoneSLn}
An  $n$-component virtual string link diagram is \emph{monotone} if it intersects $I\times \{t\}$ at $n$ points for all $t\in I$, where double points are counted with multiplicity.  
\end{defi}
We denote by  $\vPn$ the monoid of monotone elements of $\vSLn$, up to monotone transformations.  This monoid is in fact the group of virtual pure braids studied in \cite{Bardakov}.   
\begin{remarque}\label{rem:VirtualBraids}
One could also consider $\vPn$ to be the set of diagrams admitting a monotone representative (up to non-monotone transformations). 
The question of whether these to definitions agree is equivalent to that of the embedding of $\vPn$ into $\vSLn$; see Question \ref{q:q}.  
\end{remarque}

As explained in the introduction, there is a natural quotient of virtual knot theory, where one of the forbidden moves is allowed: 
\begin{defi}
We define the Overcrossings Commute (OC) move as
\[
\OC:\ \dessin{1.5cm}{OC_1}\ \longleftrightarrow \dessin{1.5cm}{OC_2}.
\]
\noi We denote by $\wSLn:=\fract{\vSLn}/{\OC}$ the quotient of $\vSLn$
  up to $\OC$ moves, which inherits a monoid structure from the
  stacking product. We call its elements
  $n$-component \emph{welded string links}.
\end{defi}
  We denote by  $\wPn$ the submonoid of $\wSLn$ of monotone elements up to monotone transformations. 
This monoid is in fact the welded pure braid group studied, for instance, in \cite{WKO1}. Thus, we will freely call welded pure braids the elements of $\wPn$. 
\begin{remarque}\label{rem:wTBW}
Unlike in the virtual case, the welded pure braid group is known to be isomorphic to the subset of $\wSLn$ admitting a monotone representative, see \cite[Rem.~3.7]{vaskho}. 
\end{remarque}

\begin{warnings}$\ $
\begin{itemize}
\item The following  Undercrossings Commute (UC) move
    \[
    \begin{array}{rcl}
      \UC:\ \dessin{1.5cm}{UC_1}&\ \longleftrightarrow\
      &\dessin{1.5cm}{UC_2}\\[-.96cm]
      &\textrm{\textcolor{red}{$\times$}}&\\[.6cm]
    \end{array},
    \]
    \noi was forbidden in the virtual context and is still forbidden in the welded context.
\item Virtual and welded notions do not coincide, even for $n=1$, where we get respectively the notion of virtual and welded long knots (see \cite{WKO1}, and also Section \ref{sec:miscellaneous}, for a summary of results).
\end{itemize}
\end{warnings}

Let us now turn to the classical versions of these objects. 
\begin{defi}\label{def:uStringLink}
An $n$-component \emph{string link} is an embedding $L=\psqcup_{i=1}^n I_i$ of $n$ disjoint copies of the oriented interval $I$ in the the standard cube  $I^3$ 
such that $I_i$ runs from $\left(p_i,\frac{1}{2},0\right)$ to $\left(p_i,\frac{1}{2},1\right)$ for all $i\in\UnN$. 
We denote by $\SLn$ the set of $n$-component classical string link up to isotopy. 
It is naturally endowed with a monoidal structure by the stacking product, with unit $\cup_{i=1}^n \{\left(p_i,\frac{1}{2}\right)\} \times I$.  
Borrowing the terminology of \cite{WKO1,WKO2}, we shall call such string links \emph{usual} in order to distinguish them from the virtual and welded ones. 
\end{defi}

Any usual string link can be generically projected in $I^2$ onto a
virtual string link diagram with classical crossings only. Hence
$\SLn$ can be described as the set of virtual string link diagrams
with no virtual crossing, modulo the classical Reidemeister moves
only.   
Actually, classical string links embed in their virtual or welded counterpart. 
This is shown strictly as in the knot case \cite[Thm.1.B]{GPV} ; in particular, this uses the fact that the group system is a complete invariant, and more specifically the fact that string link complements are Haken manifolds.  

We also denote by $\Pn$ the (usual) pure braid group on $n$ strands,
which can likewise be seen as the set of monotone virtual string links
with no virtual crossing.  
Recall from  \cite{FRR} that this map is known to be a well-defined embedding  of $\Pn$  in $\vPn$ and in $\wPn$. 

\begin{remarque}\label{rem:topo}
Unlike the class of usual knotted objects, which is intrinsically topological, virtual and welded objects are diagrammatical in nature. 
However, both theories enjoy nice topological interpretations. \\
It is now well known and understood  \cite{CKS,Kuperberg} that virtual knot theory can be realized topologically as the theory of knots in thickened surfaces modulo handle stabilization.  Note that the Overcrossings Commute relation is not satisfied in this topological setting. \\
A topological realization of welded theory is given by considering a certain class of surfaces embedded in $4$-space. In particular, welded string links map surjectively onto the monoid of ribbon tubes studied in \cite{vaskho}.  This had first been pointed out by Satoh for the case of welded knots \cite{Citare}, but some key ideas already appeared in early works of Yajima \cite{yajima}. 
\end{remarque}

\subsection{Self-local moves and homotopy relations}\label{sec:homo}

In this note, we consider two types of equivalence relations on the above usual/virtual/welded objects, both generated by self-local moves.

Two virtual string link diagrams are related by a \emph{self-virtualization} if one can be obtained from the other by turning a classical self-crossing into a virtual one.
\begin{defi}\label{def:selfvirtualization}
The \emph{$\sv$--equivalence} is the equivalence relation on virtual knotted objects generated by self-virtualizations. 
\end{defi}
  
We denote respectively by $\vSLHn$ and $\wSLHn$ the quotient of $\vSLn$ and $\wSLn$ under $\sv$--equivalence, which are monoids with composition induced by the stacking product.
We also denote by $\vPHn\subset\vSLHn$ and $\wPHn\subset\wSLHn$ the respective subsets of elements having a monotone representative.

There is also a natural notion of crossing change, which is a local move that switches a positive classical crossing to a negative one, and vice-versa.  If we further require that the two strands involved belong to the same component, we define a \emph{self-crossing change}. 

The classical notion of link-homotopy is the equivalence relation on usual (string) links generated by self-crossing changes.  It was introduced for links by Milnor in \cite{Milnor}, and later used by Habegger and Lin for string links \cite{HL},  in order to ``study (string) links modulo knot theory'',  and focus on the interactions between distinct components. 
More precisely, link-homotopy on usual string links allows not only to unknot each component individually,  but also simultaneously, since every usual string link is link-homotopic to a pure braid.

Thanks to the local nature of crossing changes, the notion of link-homotopy can be extended to the whole monoids $\vSLn$ and $\wSLn$. 
\begin{defi}
The \emph{$\cc$-equivalence} is the equivalence relation on virtual knotted objects generated by self-crossing changes. 
\end{defi}
We denote the quotient of $\vSLn$ under $\cc$-equivalence by $\vSLhn$. In addition, we denote by $\vPhn$ the image of $\vPn$ in $\vSLhn$.  We shall use similar notation in the usual and welded cases. 
\begin{remarque}\label{rem:flat}
 Note that, in the \emph{one-component} case, virtual knotted object up to $\cc$-equivalence coincide with the notion of flat virtual knotted objects introduced in \cite{Kauffman}.  
\end{remarque}

Since a crossing change can be realized by a sequence of two (de)virtualization moves, the $\cc$-equivalence is clearly sharper than the $\sv$--equivalence. 
It is also {\it a priori} a more natural extension of the classical situation. However, as already noted in \cite{vaskho}, 
it appears not to be the relevant notion for the study of welded string links ``modulo knot theory'';  
this is recalled in further details in Section \ref{sec:citare}.  


\section{Gauss diagram formulae for virtual and welded string links}\label{sec:GD}

In this section, we recall the main tools for proving the results of this paper, namely Gauss diagram formulae \cite{F,GPV,PV}. 
\subsection{Gauss diagrams} 
We first roughly review the notion of Gauss diagrams. 
\begin{defi}
A \emph{Gauss diagram} $G$ is a set of signed and oriented (thin) arrows
between points of $n$ ordered and oriented vertical (thick) strands,
up to isotopy of the underlying strands. Endpoints of arrows are
called \emph{ends} and are divided in two parts, \emph{heads} and
\emph{tails},  defined by the orientation of the arrow (which goes by
convention from the tail to the head). An arrow having both ends on
the same strand is called a \emph{self-arrow}.
Any Gauss diagram obtained by the deletion of several
(possibly none) arrows of $G$ is called a subdiagram of $G$.
\end{defi}
Examples of Gauss diagrams can be found in Figures \ref{fig:Kishino}
to \ref{fig:pure}, \ref{fig:sl2bis} and \ref{fig:CC'} in the next section.  
As these figures also illustrate, Gauss diagrams serve as a combinatorial tool for faithfully encoding virtual/welded knotted objects.  
Indeed, it is well known that for any virtual string link diagram $L$, there is a unique associated Gauss diagram $G_L$, 
where the set of classical crossings in $L$ is in one-to-one correspondence with  the set of arrows in $G_L$, 
and that this correspondence induces a bijection between $\vSL_n$ and the set of Gauss diagrams up to the natural analogues 
of classical Reidemeister moves\footnote{Note  that there are no Gauss diagram analogues of the mixed and virtual Reidemeister moves, 
since virtual crossings are simply not materialized in Gauss diagrams. }  of Figure \ref{fig:AllReidMoves}. See \cite{F,GPV} for the usual link case. 

Likewise, welded diagrams are faithfully encoded by equivalence classes of Gauss diagram up to the following Tails Commute (TC) move, which is the Gauss diagram analogue of Overcrossings Commute: 
\[
\TC:\ \dessin{2cm}{W_1}\ \longleftrightarrow\ \dessin{2cm}{W_2},
\]
where the signs $\varepsilon$ and $\eta$ are arbitrary.

Finally, the next observation allows to study the two homotopy relations introduced in Section \ref{sec:homo}. 
At the level of Gauss diagrams, the $\cc$-equivalence is generated by the local move which switches both the orientation and the sign of a
self-arrow; the $\sv$-equivalence, on the other hand, is simply generated by the removal of a self-arrow. 

\subsection{Gauss diagrams formulae} 
We now review Gauss diagrams formulae. 
First, let us define an \emph{arrow diagram} to be  an unsigned Gauss diagram
i.e. an arrow diagram on $n$ strands of $n$ intervals with unsigned arrows (see \cite{polyak_arrow}).

Given a Gauss diagram $G$, there is an associated formal linear combination of arrow diagrams 
 \[
i(G) := \sum_{G'\subseteq G} \sigma(G')A_ {G'},
\]
where the sum runs over all subdiagrams of $G$, $\sigma(G')$ denotes the product of the signs  in the subdiagram $G'$ and $A_{G'}$ is the arrow diagram obtained from $G'$ by forgetting the signs. 

The $\mathbb{Z}$-module $\AA_n$ generated by arrow diagrams on $n$ strands comes equipped with a natural scalar product $(-,-)$, 
defined by $(A,A')=\delta_{A,A'}$ for any two arrow diagrams $A$ and $A'$, where $\delta$ denotes the Kronecker delta symbol.  
So given any formal linear combination $F$ of arrow diagrams in $\AA_n$, one can define a map on the set of Gauss diagrams on $n$ strands $\langle F ; - \rangle$ by setting 
 $$ \langle F ; G \rangle : =  \big( F,i(G) \big) $$ 
 for any Gauss diagram $G$.  Roughly speaking, this map counts with signs subdiagrams of $G$. 

We can define in this way a map on the set of virtual diagrams by setting $\langle F ; L \rangle := \langle F ; G_L \rangle$,  for any virtual string link diagram $L$ with associated Gauss diagram $G_L$.   We say that $F$ is the \emph{defining linear combination} of the map $\langle F ; - \rangle$. 
Now, such a map does not, in general, factor through the generalized Reidemeister moves.  We recall below a simple criterion, due to Mortier \cite{mortier}, which gives a sufficient condition for getting a virtual string link invariant in this way.  To state this criterion, we need a few more definitions.

A \emph{degenerate arrow diagram} is an arrow diagram where two arrow ends are allowed to coincide. We denote by $\DD_n$ the abelian group freely generated by degenerate arrow diagrams on $n$ strands, modulo the relations: 
\[
\dessin{1.2cm}{GSh1_1}\ =\ \dessin{1.2cm}{GSh1_2}\ +\ \dessin{1.2cm}{GSh1_3}\quad ;\quad \dessin{1.2cm}{GSh2_1}\ =\ \dessin{1.2cm}{GSh2_2}\ +\ \dessin{1.2cm}{GSh2_3}  . 
\]
In each term of the above relations, the three vertical lines are portions of the $n$ vertical strands of a degenerate arrow diagram, which may belong to the same component or not. 

Two arrow ends are called \emph{adjacent} if they are met consecutively when running along some strand.   An \emph{internal edge} of an arrow diagram is a portion of a strand cobounded by two \emph{distinct} adjacent arrow ends.   We define a linear map $d: \AA_n\rightarrow \DD_n$ by sending any arrow diagram $A$ on $n$ strands to 
\[
d(A) : = \sum_{\textrm{internal edges $e$ of $A$}}
(-1)^{\uparrow_e}.\eta(e).A_e,
\]
\noi where $A_e\in \DD_n$ denote the degenerate arrow diagram obtained by shrinking $e$ to a point,   $\uparrow_e \in \{ 0,1,2 \}$ is the number of arrow heads bounding $e$,  and $\eta(e)\in \{ \pm 1 \}$ is given by 
\[
\eta(e)=\left\{ \begin{array}{cl}
       -1 & \textrm{if the two arrows cobounding $e$ do not cross in $A$,} \\
       +1 & \textrm{otherwise,}
       \end{array}\right.
\]
\noi with the convention that two arrows \emph{do not cross} in an arrow diagram if, when running along the $n$ strands $I_1$ to $I_n$, in order and following the orientations, we meet the two ends of one of these arrows consecutively. 

\begin{theo}\label{thm:mortier}
Let $F\in \AA_n$ be a linear combination of arrow diagrams on $n$ strands:
\begin{enumerate}
\item if $F$ does not contain any arrow with adjacent ends, then  $\langle F ; - \rangle$ is invariant under move RI;
\item if $F$ does not contain two arrows with adjacent heads and adjacent tails, then  $\langle F ; - \rangle$ is invariant under move RII;
\item if $\langle F ; - \rangle$ is invariant under move RII and if $d(F)$ is zero in $\DD_n$, then  $\langle F ; - \rangle$ is invariant under moves RIII;
\item if $F$ does not contain any pair of adjacent arrow tails, then  $\langle F ; - \rangle$ is invariant under $\OC$;
\item if, for each diagram of $F$ which has a self-arrow $\vec a$, the diagram obtained by reversing the orientation of $\vec a$ also appears in $F$ with opposite sign, then  $\langle F ; - \rangle$ is invariant under self-crossing change;
\item if $F$ does not contain any self-arrow, then $\langle F ; - \rangle$ is invariant under self-virtualization. 
\end{enumerate}
\end{theo}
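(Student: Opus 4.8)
The plan is to handle all six items by one and the same device. Each of the relevant local moves, performed on a virtual string link diagram $L$, changes its Gauss diagram $G_L$ in a fully explicit and localised way: it inserts, deletes, or locally re-orders a prescribed small set of arrows. In each case one compares $\langle F ; G_L\rangle = (F, i(G_L))$ for the two diagrams by sorting the subdiagrams of the ``new'' Gauss diagram according to how they meet the modified region. Those subdiagrams disjoint from the modified arrows are in an obvious bijection, preserving both $\sigma$ and the underlying arrow diagram, with subdiagrams of the ``old'' Gauss diagram, hence contribute equally; the content of each statement is to control the finitely many remaining families of subdiagrams.

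For items (1), (2), (4) and (6) this control is a direct consequence of the respective hypothesis. A self-virtualization deletes a self-arrow $\vec a$ of $G_L$, and every subdiagram containing $\vec a$ has an underlying arrow diagram with a self-arrow, hence pairs to $0$ with $F$; this gives (6). An RI move inserts (or removes) a self-arrow whose head and tail are \emph{adjacent}, and these two ends stay adjacent in any subdiagram retaining that arrow, so such subdiagrams pair to $0$ with an $F$ having no arrow with adjacent ends; this gives (1). The $\OC$ move is, on Gauss diagrams, the $\TC$ move, which transposes two adjacent arrow \emph{tails} on a strand: a subdiagram keeping at most one of the two arrows involved is unchanged (the mutual order of all its retained ends is untouched), while a subdiagram keeping both still has two adjacent tails and so pairs to $0$ with an $F$ having no pair of adjacent tails; this gives (4). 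An RII move inserts two arrows with adjacent heads on one strand and adjacent tails on another, carrying opposite signs: a subdiagram keeping both has adjacent heads and adjacent tails and so pairs to $0$ under the hypothesis of (2); a subdiagram keeping exactly one of the two new arrows is matched with the one keeping the other (the two new arrows are parallel, hence interchanged by an isotopy of the underlying strands once the other is deleted), and, since they carry opposite signs, these two subdiagrams cancel. This proves (2).

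Item (5) is instead a short cancellation computation. A self-crossing change turns a self-arrow $\vec a$ of sign $\varepsilon$ into the oppositely oriented arrow $\overleftarrow a$ of sign $-\varepsilon$, producing a diagram $L'$. Splitting the subdiagrams of $G_L$ and of $G_{L'}$ into those avoiding that crossing and those using it, one obtains
\[
i(G_L) - i(G_{L'}) \;=\; \varepsilon \sum_{H \subseteq G_L\setminus\vec a} \sigma(H)\,\bigl(A_{H\cup\vec a} + A_{H\cup\overleftarrow a}\bigr),
\]
where $A_{H\cup\vec a}$ denotes the arrow diagram of the subdiagram $H\cup\{\vec a\}$, so that $\langle F ; G_L\rangle - \langle F ; G_{L'}\rangle = \varepsilon \sum_{H}\sigma(H)\big[(F, A_{H\cup\vec a}) + (F, A_{H\cup\overleftarrow a})\big]$. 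For each $H$, the arrow diagram $A_{H\cup\vec a}$ has the self-arrow $a$, and $A_{H\cup\overleftarrow a}$ is exactly this diagram with that self-arrow reversed, so by the hypothesis of (5) the two terms in the bracket are opposite. Hence the whole sum vanishes.

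The delicate point is (3). Writing $G$ and $G'$ for the Gauss diagrams of the two sides of an RIII move, both carry the same three arrows among the three strands involved, but the move transposes, on each of those three strands, the pair of consecutive arrow ends belonging to the two arrows meeting it — so RIII is a coherent triple of ``elementary flips'' of adjacent ends. Consequently a subdiagram using at most one of the three arrows is unchanged, and $i(G) - i(G')$ is supported on subdiagrams using at least two of them. To see that these cancel after pairing with $F$, I would first use the RII-invariance hypothesis to reduce to a single model RIII move (all variants differing from it by RII moves), and then identify the difference of counts with $\pm\langle d(F); \Gamma\rangle$ for a degenerate arrow diagram $\Gamma$ read off from the triple point: an elementary flip of two consecutive ends changes an arrow diagram precisely ``to first order'' by shrinking the internal edge between those ends, the factors $(-1)^{\uparrow_e}$ and $\eta(e)$ in the definition of $d$ recording which of the ends are heads and whether the two arrows cross. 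The two defining relations of $\DD_n$ are exactly what is needed for this pairing to be well-defined and to assemble the contributions of the several flips coherently, so $d(F) = 0$ in $\DD_n$ forces $\langle F ; G\rangle = \langle F ; G'\rangle$. The main obstacle is precisely this last identification: organising the correspondence with the map $d$, pinning down the signs $(-1)^{\uparrow_e}\eta(e)$, and verifying that the two $\DD_n$-relations are exactly the relations among the triple-point contributions. Everything else reduces to a direct inspection of how arrow ends move under the elementary moves.
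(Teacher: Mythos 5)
Your treatment of items (1), (2), (4), (5) and (6) is correct, and it follows the only reasonable route: split the subdiagrams of the modified Gauss diagram according to how they meet the arrows affected by the move, observe that the untouched subdiagrams contribute equally on both sides, and kill or cancel the remaining families using the hypothesis on $F$. The cancellation computation for (5) is right, including the point that the hypothesis forces $(F,A_{H\cup\vec a})+(F,A_{H\cup\overleftarrow a})=0$ for every $H$, whether or not either diagram actually occurs in $F$. The paper writes none of this out --- its entire proof consists of the remark that the only difficult part is RIII-invariance, which it attributes to Mortier \cite{mortier} and transplants from the virtual knot setting to string links --- so for these five items your proposal is, if anything, more complete than the source.

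Item (3) is another matter, and you say so yourself. Your description of the effect of RIII on Gauss diagrams (a simultaneous transposition of the pair of adjacent ends on each of the three strands involved, with signs and orientations of the three arrows preserved) is correct, and the intended identification of $\langle F;G\rangle-\langle F;G'\rangle$ with a pairing of $d(F)$ against data read off the triple point is indeed the mechanism behind Mortier's criterion. But the proposal stops exactly at the step that constitutes the theorem: you do not carry out the bookkeeping expressing the difference of counts as a sum over internal edges weighted by $(-1)^{\uparrow_e}\eta(e)$, you do not verify that the two defining relations of $\DD_n$ are precisely the relations generated by the oriented variants of RIII (which is where the reduction via RII-invariance must actually be put to work), and you explicitly flag this as ``the main obstacle''. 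As it stands, item (3) is asserted rather than proved, and since it is the one nontrivial clause of the statement, this is a genuine gap. To close it you must either reproduce Mortier's computation --- checking along the way that it survives the passage from knots to $n$-strand string links, which is the one adaptation the paper claims is verbatim --- or do as the paper does and cite \cite{mortier} for this step.
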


The only difficult part of the statement is the invariance under RIII and it is due to A. Mortier.  It can be found in \cite{mortier} where it is stated as an equivalence in the context of virtual knots.  However, the arguments adapt verbatim to the case of virtual string links. 
Actually, Mortier pointed out the fact that each point in Theorem \ref{thm:mortier} is also an equivalence in the string link case (this fact can be proved using a suitable Polyak algebra).

\begin{exemple}
As an example, let us consider the two invariants $v_{2,1}$ and $v_{2,2}$, defined in \cite{GPV} by 
\[
v_{2,1} = \left\langle \VTO , - \right\rangle 
\quad 
\textrm{ and }
\quad
v_{2,2} = \left\langle \VT , - \right\rangle.
\]
\noi They are easily seen to be invariants of virtual $1$-string links.  Indeed, for, say, the latter one, we have 
\[ d\left( \VT \right) = -\VTU + \VTD - \VTT = 0\in \mathcal{D}_1. \]
The invariant $v_{2,1}$ is moreover a welded $1$-string links invariant, while $v_{2,2}$ is not, 
since the defining diagram of the latter contains two adjacent arrow tails. 
As a matter of fact, the virtual string link $K$ of Figure \ref{fig:sl1} is trivial in $\wSL_1$, but we have $v_{2,2}(K)=-1$.
\end{exemple}

In the rest of the paper, we leave it as an exercise to the reader to check using Theorem \ref{thm:mortier} 
that each invariant defined via an arrow diagram formula has the desired invariance properties. 

\section{Results on usual, virtual and welded braid-like objects} \label{sec:miscellaneous}

In this section, we recall some comparative results on 
usual, virtual and welded knotted objects, 
and we provide further results comparing various notions of homotopy for these objects.
They are roughly summarized in Figure \ref{fig:Connections}.

\subsection{Some analogies between the usual and welded theories} \label{sec:citare}
Let us start by recalling a couple of results from  \cite{vaskho}, on the $\sv$--equivalence for welded string links.  
On one hand, we have the following 
\begin{theo}[\cite{vaskho}]  \label{th:wSLh=wPh}
Every welded string link is monotone up to self-virtualization. 
\end{theo}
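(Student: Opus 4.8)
The plan is to show that any welded string link diagram $L$ can be transformed into a monotone diagram by a sequence of generalized Reidemeister moves, $\OC$ moves, and self-virtualizations. First I would set up a height function on the diagram, i.e. count the number of ``bad'' arcs — maximal arcs of a strand along which the vertical coordinate is decreasing while a non-monotone event happens, or equivalently count the critical points / the local maxima of the composition of $L$ with the projection onto the $I$-factor of $I\times I$ used in Definition \ref{def:MonotoneSLn}. The idea is to measure how far $L$ is from being monotone and to push this measure down to zero by local modifications, each of which is realizable by the allowed moves.

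The key step is a ``finger move'' or ``slide'' argument: whenever a strand fails to be monotone, there is a local maximum; I would slide the arc above this maximum upward across the diagram (a detour move takes care of virtual crossings for free), so that the only obstruction to pulling it all the way up comes from classical crossings between this arc and another strand. When that crossing is between two \emph{distinct} components we simply push past it (the crossing remains classical but moves), exactly as in the Habegger–Lin argument that every usual string link is link-homotopic to a pure braid; the total count of local maxima drops. When the crossing is a \emph{self}-crossing — an arc of a component passing over or under another arc of the same component — we are allowed to virtualize it, removing the obstruction at the cost of a self-virtualization, which is legal since we work up to $\sv$-equivalence. Iterating, one reduces the number of local maxima on each strand to its minimum, namely one (strands start at the bottom and end at the top), which means the diagram has become monotone. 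The $\OC$ relation enters to let overstrands slide freely past one another, making the self-crossings that obstruct the slide accessible for virtualization.

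I expect the main obstacle to be the careful bookkeeping that the local maxima count is a well-defined, monotonically decreasing complexity: one must check that sliding an arc up does not create new local maxima on \emph{other} strands, and that each self-virtualization genuinely removes an obstruction without reintroducing one elsewhere — this requires choosing the ``topmost'' local maximum and an outermost obstructing crossing, in the spirit of an innermost-disc / topmost-arc argument. A clean way to organize this is to induct on the lexicographically ordered vector of local-maxima counts along the $n$ strands, handling the strand with the highest top-maximum first; alternatively one can invoke the welded-tangle normal form and reduce to the combinatorics of Gauss diagrams, where self-virtualization is simply removal of a self-arrow (as recalled at the end of Section \ref{sec:GD}), so the statement becomes the assertion that any Gauss diagram with no self-arrows admits a monotone (braid-like) representative up to the Reidemeister and $\TC$ moves.
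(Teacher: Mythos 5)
The paper itself does not prove this statement: it is quoted from \cite{vaskho}, where it rests on the commutation of arrows up to $\sv$-equivalence (\cite[Prop.~4.11]{vaskho}, the move illustrated in Figure~\ref{fig:TTb}) together with a Habegger--Lin-type induction. Measured against that, your argument has a genuine gap at its central step: the claim that when the sliding arc meets a classical crossing between two \emph{distinct} components ``we simply push past it''. This is precisely where the entire difficulty of the theorem is concentrated, and the step is not available. An arc of strand $i$ passing \emph{under} an arc of strand $j$ cannot be slid off it by generalized Reidemeister, detour and $\OC$ moves: $\OC$ only gives freedom to \emph{over}-arcs, $\UC$ remains forbidden, and virtualization is not permitted on a crossing between distinct components. (This is also why, in the usual setting, a string link is not in general \emph{isotopic} to a pure braid.) The same confusion appears in your closing reformulation: it is not true that a Gauss diagram with no self-arrows is braid-like up to Reidemeister and $\TC$ moves alone --- the obstruction is the interleaving of the \emph{inter-strand} arrows, as witnessed by the string link $L$ of Figure~\ref{fig:sl2} and the invariant $S_2$ of Lemma~\ref{prop:PSLh}, which vanishes on pure braids. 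A secondary issue: sliding a local maximum \emph{upward} cannot decrease your complexity, since the strand ends are pinned to the top face; maxima are removed by cancellation against minima, and it is exactly that cancellation which the inter-strand crossings obstruct.

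The missing idea is that self-virtualization must be applied to self-crossings that are \emph{created} during the process, not merely to those already present as obstructions. The paradigm is Figure~\ref{fig:TTb}: to reorder two classical crossings between strands $i$ and $j$, one first performs classical Reidemeister moves introducing a pair of self-crossings of strand $j$, then virtualizes that pair, then detours the resulting virtual arc away. That this creation step is unavoidable is certified inside the present paper: the pure braids $T$ and $T'$ of Figure~\ref{fig:pure} contain no self-crossings and are not $\cc$-equivalent (Lemma~\ref{purebraids}, via $Q_2$), hence not related by Reidemeister and $\OC$ moves alone, yet they are $\sv$-equivalent --- so the $\sv$-moves relating them necessarily act on self-crossings produced along the way. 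Your procedure, as written, has no mechanism for producing such moves, and the Habegger--Lin argument you invoke as a model works the same way in the usual setting: self-crossing changes are performed on intersections created while homotoping a strand in the complement of the others, not on crossings of the initial diagram. A corrected proof would either import \cite[Prop.~4.11]{vaskho} as the key lemma for sorting inter-strand arrows, or run the strand-by-strand induction using the fact that, up to $\sv$-equivalence, a single strand may be freely homotoped (self-intersections being virtualizable) in the complement of the remaining ones.
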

It follows in particular that $\wSLHn$ is a group. 
Theorem \ref{th:wSLh=wPh} is a welded analogue of a result of Habegger an Lin \cite{HL}, which states that any usual string link is link-homotopic to a pure braid.

On the other hand, we have a classification result, analogous to \cite[Thm.~1.7]{HL} in the usual case.   
\begin{theo}[\cite{vaskho}] \label{th:wSLh=AutC}
The groups $\wSLHn$, $\wPHn$ and $\AutC(\RFn)$ are all isomorphic. 
\end{theo}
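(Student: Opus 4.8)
The plan is to prove the two isomorphisms $\wSLHn\cong\wPHn$ and $\wPHn\cong\AutC(\RFn)$ separately, the first being essentially formal and the second being the substantive classification statement. For the first, Theorem \ref{th:wSLh=wPh} tells us that the natural inclusion-induced map $\wPHn\to\wSLHn$ is surjective, since every welded string link is monotone up to self-virtualization; so it suffices to show this map is injective, i.e. that two welded pure braids which are $\sv$-equivalent as string links are already $\sv$-equivalent as pure braids. One should note that $\wSLHn$ is a group (as remarked after Theorem \ref{th:wSLh=wPh}) so this identifies $\wPHn$ as a group as well.

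For the isomorphism $\wPHn\cong\AutC(\RFn)$, the plan is to build on the known welded analogue of Artin's theorem: there is a natural action of welded pure braids $\wPn$ on the free group $\Fn$, giving an isomorphism $\wPn\cong\AutC(\Fn)$ (this is the map $\fDA$, recalled in the excerpt from \cite{WKO1}), where a welded pure braid sends each generator $x_i$ to a conjugate of itself and fixes the product $x_1\cdots x_n$ (so in fact $\wPn\cong\AutC^0(\Fn)$, but one can arrange conventions so that the relevant target is $\AutC(\Fn)$). One then observes that self-virtualization of a crossing between two strands of the same component corresponds, at the level of the group action, precisely to killing the relation $[x_i, g^{-1}x_i g]$: a self-crossing of strand $i$ is what allows $x_i$ to commute with a conjugate of itself. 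Therefore the action descends to an action on $\RFn$, yielding a well-defined map $\wPHn\to\AutC(\RFn)$, and this map is surjective because the generators of $\AutC(\RFn)$ lift to generators of $\AutC(\Fn)\cong\wPn$. The crux is injectivity: if a welded pure braid acts trivially on $\RFn$, it must be trivial up to self-virtualization. Here I would reduce, using a reduced-Artin-combing argument analogous to Habegger--Lin, to showing that a welded pure braid acting trivially on $\RFn$ can be combed into a product of self-virtualization generators; concretely, one inducts on the number of strands, peeling off the last strand, and uses that the kernel of $\AutC(\Fn)\to\AutC(\RFn)$ restricted to the relevant subgroup is generated by the ``self'' elements.

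Alternatively — and this is probably the cleaner route given what is already available — one can bypass the pure-braid computation by invoking Theorem \ref{th:wSLh=AutC}'s companion machinery directly on string links: the group system gives a well-defined homomorphism $\wSLHn\to\AutC(\RFn)$, surjectivity follows from Theorem \ref{th:wSLh=wPh} together with the welded Artin theorem, and injectivity is the statement that the reduced group system is a complete $\sv$-invariant of welded string links. One would prove this last point by the Gauss diagram / combing method: given a welded string link with trivial reduced peripheral data, repeatedly use self-virtualization to simplify, strand by strand, reducing to the trivial diagram; the reduced free group relations $[x_i,g^{-1}x_ig]=1$ are exactly what the self-virtualizations on component $i$ buy you. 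Combining the two chains of isomorphisms $\wSLHn\cong\AutC(\RFn)$ and $\wSLHn\cong\wPHn$ gives all three groups isomorphic.

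The main obstacle I expect is injectivity in the classification map $\wSLHn\to\AutC(\RFn)$ (equivalently $\wPHn\to\AutC(\RFn)$): showing that triviality of the reduced group system forces $\sv$-triviality. This is the welded counterpart of the technical heart of Habegger--Lin, and the argument must be adapted carefully to the welded setting — in particular one must check that the combing procedure only ever requires \emph{self}-virtualizations (never a crossing change between distinct components, nor a non-self virtualization), and that the Overcrossings Commute / Tails Commute relation does not obstruct the inductive simplification. The surjectivity and well-definedness parts, by contrast, should follow fairly directly from the welded Artin theorem of \cite{WKO1} and the observation that self-virtualization implements the reduced-free-group relations.
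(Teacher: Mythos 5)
First, a point of comparison: the paper does not actually prove Theorem~\ref{th:wSLh=AutC} --- it is imported wholesale from \cite{vaskho}, and the only indication of method given here is the remark that the classification is achieved by a virtual extension of Milnor invariants. So your proposal can only be measured against the known strategy of that reference, not against an in-text argument. Two smaller issues first. Under the paper's conventions the first isomorphism needs no injectivity argument at all: $\wPHn$ is \emph{defined} as the subset of $\wSLHn$ of classes admitting a monotone representative, so Theorem~\ref{th:wSLh=wPh} directly gives $\wPHn=\wSLHn$; the injectivity question you raise only arises for the quotient-of-$\wPn$ definition (it is a genuine and open issue in the virtual setting, cf.\ Question~\ref{q:q}, but vacuous here). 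Second, your parenthetical ``so in fact $\wPn\cong\AutC^0(\Fn)$'' is wrong: by \cite{FRR,WKO1} the welded pure braid group realizes \emph{all} of $\AutC(\Fn)$, and it is the usual pure braid group $\Pn$ whose Artin image is cut out by the condition $f(x_1\cdots x_n)=x_1\cdots x_n$ (see the left column of Figure~\ref{fig:Connections}). If $\wPn$ were $\AutC^0(\Fn)$, your map could only reach $\AutC^0(\RFn)$ and surjectivity onto $\AutC(\RFn)$ would fail.

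The genuine gap is the one you yourself flag: injectivity of $\wSLHn\to\AutC(\RFn)$. Well-definedness (self-virtualization on strand $i$ imposes exactly the relations $[x_i,g^{-1}x_ig]=1$) and surjectivity (McCool's generators of $\AutC(\Fn)$ are realized by welded pure braids) are indeed routine, but injectivity is the entire content of the theorem, and ``a reduced-Artin-combing argument analogous to Habegger--Lin'' is a statement of intent rather than an argument. In \cite{vaskho} this step is carried out by exhibiting compatible iterated semidirect-product decompositions of the $\sv$-reduced welded pure braid group and of $\AutC(\RFn)$, i.e.\ by showing that the welded extension of Milnor invariants is complete; the delicate point, which your sketch correctly identifies but does not address, is verifying that the combing can be performed using only self-virtualizations together with the Tails Commute relation, and that the subgroup generated by the ``last-strand'' elements is exactly the reduced free group $\RF_{n-1}$ with no further collapse. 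Without that verification the proposal is a plausible roadmap, not a proof.
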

\noindent Note, moreover, that this classification of welded string links up to $\sv$--equivalence is achieved by a virtual extension of Milnor invariants. 

The next theorem illustrates the fact, suggested by the above results, that the $\sv$--equivalence can indeed be seen as a natural extension of the usual link-homotopy to the welded case. 
\begin{theo}\label{homotopyequiv}
Let $L_1, L_2 \in \SLn$, and let $\iota_w:  \SLn \to \wSLn$ be the natural map induced by the inclusion at the level of diagrams.
If  $\iota_w(L_1)$ and $\iota_w(L_2)$ are $\sv$--equivalent, then $L_1$ and $L_2$ are $\cc$-equivalent. 
\end{theo}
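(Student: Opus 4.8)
The plan is to exploit the classification of usual string links up to link-homotopy via their action on the reduced free group $\RFn$ (Habegger--Lin \cite{HL}), and to show that this invariant survives the passage to welded objects and to $\sv$-equivalence. Concretely, for a usual string link $L$ there is an associated ``reduced group system'', namely the quotient $\pi_1(I^3\setminus L)$-style data recording, for each component $i$, the longitude $\lambda_i$ as an element of $\RFn$ modulo the obvious indeterminacy; by \cite[Thm.~1.7]{HL} (restated in the excerpt), $L_1$ and $L_2$ are $\cc$-equivalent if and only if they induce the same element of $\AutC^0(\RFn)$, equivalently the same tuple of reduced longitudes. So it suffices to show: if $\iota_w(L_1)$ and $\iota_w(L_2)$ are $\sv$-equivalent in $\wSLn$, then $L_1$ and $L_2$ have the same reduced longitudes.

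The first step is to recall that the welded string link monoid carries a well-defined longitude map to (tuples of conjugacy-type data in) $\RFn$: this is exactly the content of Theorem~\ref{th:wSLh=AutC}, which says $\wSLHn \cong \AutC(\RFn)$ and that the isomorphism is realized by a virtual/welded extension of Milnor invariants. In particular there is a monoid homomorphism $\mu \colon \wSLn \to \AutC(\RFn)/(\text{inner-type ambiguity})$ that is invariant under $\sv$-equivalence and whose restriction to $\SLhn$ (the image of usual string links) is Habegger--Lin's complete link-homotopy invariant. The second step is then purely formal: since $\iota_w(L_1)$ and $\iota_w(L_2)$ are $\sv$-equivalent, $\mu(\iota_w(L_1)) = \mu(\iota_w(L_2))$; and since $\mu\circ\iota_w$ computes, on usual string links, precisely the reduced longitudes (because the welded group and its peripheral structure restrict to the usual ones on classical diagrams — this is the compatibility of the Wirtinger-type presentation with inclusion of diagrams, together with the fact noted in the excerpt that classical string links embed into welded ones), we conclude that $L_1$ and $L_2$ have equal reduced longitudes, hence are $\cc$-equivalent by Habegger--Lin.

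The main obstacle, and the step requiring genuine care rather than citation, is the second half of the previous paragraph: verifying that the welded Milnor-type invariants of Theorem~\ref{th:wSLh=AutC}, when evaluated on a \emph{classical} string link viewed welded, recover the classical reduced Milnor invariants / reduced longitudes. One must check that the welded longitude of component $i$ of $\iota_w(L)$, read off from the Gauss diagram of a classical diagram of $L$ via the arrow-diagram formulae of Section~\ref{sec:GD}, coincides in $\RFn$ with the topological longitude of the $i$-th component of the usual string link $L$. This is where the embedding $\SLn \hookrightarrow \wSLn$ (invoked in Section~\ref{sec:defi}) and the explicit form of the extended Milnor invariants from \cite{vaskho} are used; alternatively one can argue directly at the Gauss-diagram level, noting that on a diagram with only classical crossings the TC relation is invisible and the longitude word is computed by the same crossing-by-crossing reading as in the classical Wirtinger presentation, and that reducing modulo self-arrows (which is exactly what $\sv$-equivalence does to Gauss diagrams, per Section~\ref{sec:GD}) corresponds on the group side to passing from $\Fn$ to $\RFn$. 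Once this identification is in hand, the theorem follows immediately, and in fact the argument shows a little more: the natural map $\SLhn \to \wSLHn$ is injective, which is consistent with the dotted ``$\cong$''-free arrows in Figure~\ref{fig:Connections}.
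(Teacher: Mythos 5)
Your proposal is correct and follows essentially the same route as the paper: reduce to Habegger--Lin's identification $\SLhn\cong\AutC^0(\RFn)$, observe that the welded classification $\wSLHn\cong\AutC(\RFn)$ of Theorem~\ref{th:wSLh=AutC} is an $\sv$-invariant refinement of it, and check the compatibility of the two isomorphisms through $\iota_w$ at the diagram level. The only cosmetic difference is that you phrase the invariant in terms of reduced longitudes and flag a possible ``inner-type ambiguity'' that is in fact absent for string links (the longitudes are genuine elements of $\RFn$, not conjugacy classes), so the argument closes exactly as in the paper.
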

\noindent In other words, the notion of self-virtualization restricted to welded string links with only classical crossings coincides with the usual self-crossing change. 
\begin{proof}
Recall that $\AutC^0(\RF_n)$ is the set of automorphisms in $\AutC(\RF_n)$ which leave the product $x_1 x_2 \cdots x_n$ invariant.  
Theorem 1.7 of \cite{HL} states that $\SLhn\cong\AutC^0(\RF_n)$ and it is easily checked at the diagram level that this isomorphism is compatible, 
through the map $\iota_w$, with the one of Theorem \ref{th:wSLh=AutC}.  
Therefore if $L_1, L_2 \in \SLn$ are $\sv$--equivalent, then they represent the same automorphism in $\AutC(\RF_n)$ which is actually in $\AutC^0(\RF_n)$ 
since it corresponds to some usual string links.  According to Theorem 1.7 of \cite{HL}, this implies that $L_1$ and $L_2$ are $\cc$-equivalent.
\end{proof}

We will see below that the $\cc$--equivalence, on the other hand, does not allow such generalizations, 
and hence appears not to be the right notion to be considered in this context. 

\subsection{$\cc$--equivalence for virtual and welded string links}

In this section, we compare virtual and welded pure braids and string links up  to $\cc$-equivalence. 

In the case $n=1$, the situation is rather simple and well-known. \\
Obviously, we have $\vPh_1\cong\wPh_1\cong\{1\}$, since the virtual and welded braid groups on one strand themselves are trivial. 
In the string link case, however, virtual and welded objects differ: 
\begin{lemme}\label{factpipo}
Self-crossing change is an unknotting operation for welded $1$-string links, but isn't for virtual $1$-string links.  
In other words, we have $\wSLh_1\cong\{1\}$, whereas $\vSLh_1\ncong\{1\}$. 
\end{lemme}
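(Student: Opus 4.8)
The plan is to prove the two halves separately, using Gauss diagram formulae for the negative half and a direct topological/combinatorial argument for the positive half.

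\emph{That $\wSLh_1 \cong \{1\}$.} First I would show that self-crossing change is an unknotting operation for welded $1$-string links. The cleanest route is to pass to Gauss diagrams. A welded $1$-string link is a Gauss diagram on a single strand, up to the classical Reidemeister moves and the Tails Commute move. I want to show any such diagram reduces to the empty one using, in addition, the $\cc$-move (reversing both the orientation and the sign of a self-arrow) — since on one strand every arrow is a self-arrow. The strategy: using $\cc$-moves freely, I may assume every arrow points, say, downward (from higher to lower endpoint along the strand), or — more usefully — I may arrange all arrowheads to sit above all arrowtails is not quite automatic, so instead I would argue as follows. With TC available, the tails of arrows commute freely among themselves and past heads in the appropriate configuration; combined with the freedom to flip arrows via $\cc$, one can push all the heads to the bottom of the strand. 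Concretely: take the lowest arrowhead that has a tail below it; flipping that arrow if necessary and using TC to slide tails, one reduces the number of (head-below-tail) inversions, and iterating one reaches a diagram where reading top-to-bottom one meets all tails before all heads — but on a single strand this forces, after R2 cancellations, the empty diagram. Alternatively, and perhaps more robustly, I would invoke the fact that for welded $1$-string links the relevant complete invariant comes from the reduced free group / $\AutC(\RF_1)$ which is trivial, together with Theorem~\ref{th:wSLh=AutC}-type reasoning; but the hands-on Gauss-diagram reduction is the honest argument and is the step I expect to require the most care.

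\emph{That $\vSLh_1 \ncong \{1\}$.} Here I would exhibit an explicit virtual long knot that is nontrivial up to $\cc$-equivalence, and detect it by a Gauss diagram formula that is invariant under self-crossing change. The natural candidate is built from the invariant $v_{2,2} = \langle\, \VT\,, - \,\rangle$ discussed in the Example above, or a symmetrization of it. By Theorem~\ref{thm:mortier}(5), a defining combination $F$ whose every self-arrow term appears paired with its orientation-reversal carrying the opposite sign yields a $\cc$-invariant; so I would take $F$ to be such a symmetrization of $\VT$ (adding the orientation-reversed diagram with a minus sign), check it still satisfies the R1, R2, R3 conditions of Theorem~\ref{thm:mortier} — the R3 condition amounts to $d(F)=0\in\DD_1$, which follows from $d(\VT)=0$ computed in the Example together with an analogous vanishing for the reversed diagram — and thereby obtain a genuine $\cc$-invariant of $\vSL_1$. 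Then I need a virtual long knot on which this invariant is nonzero; the Kishino-type or the simple two-arrow diagram from Figure~\ref{fig:sl1} should do, and one computes the value directly by counting signed subdiagrams. The main obstacle on this side is purely bookkeeping: making sure the symmetrized $F$ genuinely satisfies all hypotheses of Theorem~\ref{thm:mortier}(1)--(3),(5) simultaneously, and that the chosen example is not secretly $\cc$-trivial — but once $F$ is a bona fide invariant, a single nonzero evaluation settles it.

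So, in order: (i) set up the Gauss-diagram picture on one strand; (ii) run the head-lowering reduction using TC and $\cc$-moves to kill any welded $1$-string link, giving $\wSLh_1\cong\{1\}$; (iii) write down the symmetrized arrow-diagram combination $F$ refining $v_{2,2}$ and verify via Theorem~\ref{thm:mortier} that $\langle F;-\rangle$ descends to $\vSLh_1$; (iv) evaluate it on an explicit virtual long knot to get a nonzero answer, proving $\vSLh_1\ncong\{1\}$. The expected hard part is step (ii), the combinatorial reduction, where one must be genuinely careful that the moves available (classical Reidemeister $+$ TC $+$ $\cc$) suffice to trivialize \emph{every} diagram and that no inversion-count argument stalls.
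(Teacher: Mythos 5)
Your step (ii) contains the one genuine gap. The inversion-reducing procedure is not justified: to move the offending head past the tail below it you must first flip that arrow, but flipping turns its head into a tail \emph{and} its tail (somewhere else on the strand) into a head, so a single ``local fix'' can create new head-below-tail inversions far away, and it is not clear your count ever decreases. You flagged this yourself, but it is exactly the point that needs an argument, and the target configuration is also wrong at the end: a diagram in which all tails precede all heads is not killed by R2 (R2 requires opposite signs and a specific orientation pattern, neither of which you control); it is killed by using TC to bring each arrow's tail adjacent to its own head and then applying R1. The clean argument, which is the paper's, avoids inversions entirely: given \emph{any} two adjacent arrow ends, flip (via a self-crossing change) whichever of the two arrows is needed so that both ends become tails, swap them by TC, and flip back. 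Thus arbitrary permutations of the arrow endpoints are available, so every arrow can be given adjacent endpoints and removed by R1. Your fallback via $\AutC(\RF_1)$ and Theorem \ref{th:wSLh=AutC} does not work either: that theorem classifies welded string links up to $\sv$-equivalence, which is coarser than $\cc$-equivalence, so triviality of $\AutC(\RF_1)$ only gives $\wSLH_1\cong\{1\}$, not $\wSLh_1\cong\{1\}$.

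For the virtual half your route differs from the paper's, which simply takes the long Kishino knot $K_0$ and cites \cite{DK} for the fact that its closure cannot be unknotted by crossing changes. Your plan --- antisymmetrize $v_{2,2}$ over orientation reversals to force hypothesis (5) of Theorem \ref{thm:mortier}, re-check hypotheses (1)--(3), and evaluate on an example --- is a reasonable alternative and has the merit of being self-contained, but every verification is deferred: you must check that $d$ of the symmetrized combination still vanishes in $\DD_1$ (it does not follow formally from $d(F)=0$ for the original $F$), and you must check nonvanishing on a concrete diagram. On that last point, the Kishino knot is a poor test case (low-degree Gauss diagram invariants are notoriously blind to it); the two-arrow example $K$ of Figure \ref{fig:sl1} is the right candidate, and indeed the paper's subsequent remark detects it in $\vSLh_1$ via the writhe polynomial of \cite{CG}, which you could also simply invoke instead of building a new formula.
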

\begin{proof}
Let us prove the first assertion.  A welded string link on one strand
has a Gauss diagram consisting of a single vertical strand and several
signed self-arrows,  and a crossing change on this welded long knot
corresponds to switching both the sign and orientation of one arrow.
So, for any two arrow ends that are adjacent on the vertical strand,
we may safely assume up to crossing changes that these are two arrow
tails, hence we may freely exchange their relative positions on the
strand using TC.  This implies that, up to crossing change and TC, any
Gauss diagram of a long knot can be turned into a diagram consisting
of only arrows with adjacent endpoints. By R1, such a Gauss diagram is clearly trivial. \\
We now turn to the virtual case.  Consider the virtual long knot $K_0$ shown in Figure \ref{fig:Kishino}, 
which is a string link version of the Kishino knot.  
As pointed out in \cite{DK}, the closure of $K_0$ is a virtual knot which cannot be unknotted by crossing changes. 
This proves that $K_0$ is not $\cc$-equivalent to the trivial long knot.  
\begin{figure}[h!]
\[ \xymatrix{\dessin{3cm}{Kishino_D}\ar@{^<-_>}[r]&\dessin{2.5cm}{Kishino_GD}}\]
\caption{The virtual long knot $K_0$, and its Gauss diagram.} \label{fig:Kishino}
\end{figure}
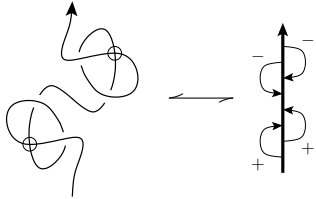
\end{proof}
\noindent Similar results in the knot case can be found in \cite[Sec.~1]{DK}; note however that, unlike in the usual case, 
the closure map from virtual/welded long knots to virtual/welded knots is not an isomorphism \cite{Kauffman}.  
\begin{remarque}
As was pointed out to the authors by Anne Isabel Gaudreau, answering a question raised in a preliminary version of this paper, 
the closure map used in the latter part of the proof, from $\cc$-equivalence classes of virtual long knots to virtual knots up to self-crossing changes, 
has a non trivial kernel. 
Consider, for example, the virtual long knot $K$ represented in Figure \ref{fig:sl1}. 
On one hand, we clearly have that the closure of $K$ is trivial. 
On the other hand, the writhe polynomial defined in \cite{CG} detects $K$, showing that it is a non trivial element in $\vSLh_1$; 
indeed, it is shown in  \cite[Thm.~4.1]{CG} that the writhe polynomial is an invariant of flat virtual long knot, that is, 
an invariant of virtual long knots up to $\cc$-equivalence (see Remark \ref{rem:flat}).  
\end{remarque}

\begin{lemme}\label{prop:wSLh}
 For $n\ge 1$, there are distinct virtual and welded string links which are $\cc$--equivalent, \ie the canonical projections 
 $\xymatrix{\vSLn\ar@{->>}[r]&\vSLhn}$
 and 
 $\xymatrix{\wSLn\ar@{->>}[r]&\wSLhn}$
 are not injective.  
\end{lemme}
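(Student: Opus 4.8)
The plan is to exhibit, for each $n \geq 1$, an explicit nontrivial element in the kernel of the projection $\vSLn \twoheadrightarrow \vSLhn$, and similarly for the welded projection; by embedding a one-component example into the first strand and leaving the other strands trivial, it suffices to treat $n=1$, since the one-strand examples persist as nontrivial elements of the $n$-strand monoids (the other components can be used, if needed, via the stacking product to keep the argument self-contained). For the virtual case, I would simply invoke the element $K$ of Figure \ref{fig:sl1} already discussed in the preceding remark: it is nontrivial in $\vSLh_1$ (detected by the writhe polynomial of \cite{CG}, which is a $\cc$-equivalence invariant), yet it becomes trivial after \emph{some} nontrivial move — more to the point, we need an element that is $\cc$-equivalent to a \emph{different} element, i.e. we need two distinct classes in $\vSLn$ mapping to the same class in $\vSLhn$. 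The cleanest way: take a nontrivial virtual long knot $J$ all of whose crossings are self-crossings (e.g. any virtual long knot whose underlying Gauss diagram has only self-arrows, such as the Kishino-type knot $K_0$ of Figure \ref{fig:Kishino}, or a single self-arrow giving a nontrivial flat class), together with the trivial long knot $\mathbb{1}$; then $J$ and $\mathbb{1}$ are $\cc$-equivalent by definition once $J$ is unknotted by self-crossing changes — but that is only true if $J$ happens to be $\cc$-trivial, which the Kishino example explicitly is \emph{not}.

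So the correct construction is the opposite one: I would take $\mathbb{1}$ and a long knot $J$ which \emph{is} $\cc$-equivalent to $\mathbb{1}$ (i.e. unknottable by self-crossing changes) but is \emph{not} equal to $\mathbb{1}$ in $\vSLn$. Such a $J$ exists because a single self-crossing change, performed on a diagram with exactly one classical self-crossing arrow, relates the trivial long knot to a long knot with one self-arrow of the opposite sign and reversed orientation — but the latter is RI-trivial. A genuinely nontrivial such $J$ is obtained by inserting a ``clasp''-type pattern: take two self-arrows forming a pattern detected by a $\cc$-invariant-insensitive but Reidemeister-nontrivial invariant; concretely, I would use a long knot realizing a nonzero value of $v_{2,2}$ (which is a \emph{virtual} invariant, not a $\cc$-invariant, since its defining diagram has a self-arrow whose orientation-reversal does not reappear with opposite sign in $\VT$), and arrange its two arrows to be self-arrows so that a single self-crossing change trivializes it. Then $J \neq \mathbb{1}$ in $\vSLn$ (by $v_{2,2}$) but $[J] = [\mathbb{1}]$ in $\vSLhn$, giving non-injectivity. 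For the welded case, Lemma \ref{factpipo} already shows $\wSLh_1 \cong \{1\}$ while $\wSL_1 \ncong \{1\}$ (detected by $v_{2,1}$, a welded invariant, on the knot $K$ of Figure \ref{fig:sl1}, whose closure is trivial but which is nontrivial in $\wSL_1$); so \emph{any} nontrivial element of $\wSL_1$ whose class dies in $\wSLh_1$ does the job, and by Lemma \ref{factpipo} every element of $\wSL_1$ has trivial image, so we just need one nontrivial element, which $K$ provides.

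The main obstacle is bookkeeping rather than conceptual: one must produce a single explicit diagram whose two (or few) classical crossings are \emph{self}-crossings — so that after one self-crossing change it becomes RI/RII-trivial — while simultaneously being distinguished from the trivial string link by an honest virtual (resp. welded) invariant that survives in $\vSLn$ (resp. $\wSLn$). The verification that the invariant is nonzero is a finite subdiagram count via $\langle F; -\rangle$, and the verification that the chosen crossing change trivializes the diagram is a direct Reidemeister/TC reduction on the Gauss diagram. Concretely I would: (i) fix $n$, (ii) take the one-strand example on strand $I_1$ and the trivial diagram on strands $I_2, \dots, I_n$, giving two elements $L, L' \in \vSLn$ (resp. $\wSLn$); (iii) check $L \neq L'$ using $v_{2,2}$ (resp. $v_{2,1}$, or the writhe polynomial of \cite{CG}) evaluated on $L$ versus $L'=\mathbb{1}$; (iv) check that the distinguishing crossing is a self-crossing, so the single move relating $L$ to $L'$ is a self-crossing change, whence $[L] = [L']$ in $\vSLhn$ (resp. $\wSLhn$); (v) conclude the projections are not injective. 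The welded half is essentially immediate from Lemma \ref{factpipo}; the virtual half requires the slightly more careful choice of $J$ described above to ensure both ``dies under $\cc$'' and ``nontrivial in $\vSLn$'' hold simultaneously.
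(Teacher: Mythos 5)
Your overall strategy --- reduce to $n=1$ by stacking trivial strands, then exhibit a nontrivial long knot that one self-crossing change unknots --- is the right one and is essentially what the paper does, but both of your concrete witnesses fail. For the welded half, you claim that the long knot $K$ of Figure \ref{fig:sl1} is nontrivial in $\wSL_1$ and detected by $v_{2,1}$; the paper states the opposite: $K$ is \emph{trivial} in $\wSL_1$ (that is precisely why it is used there to show that $v_{2,2}$ is not a welded invariant), and consequently $v_{2,1}(K)=0$ since $v_{2,1}$ is a welded invariant. Lemma \ref{factpipo} gives you $\wSLh_1\cong\{1\}$ but says nothing about $\wSL_1\ncong\{1\}$, so your welded argument is left without a witness. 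For the virtual half you never actually produce the long knot $J$: after correctly discarding the Kishino example $K_0$ (which is not $\cc$-trivial) you describe a construction that ``would'' realize $v_{2,2}\neq 0$ while being killed by one self-crossing change, and then defer the verification to ``bookkeeping''. As it stands, neither projection has been shown non-injective.

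The gap is easy to close, and the paper does it in one line: take $T\in\SL_1$ to be the string link whose closure is the right-handed trefoil. Since classical string links embed in their virtual and welded counterparts (via the group system, as recalled in Section \ref{sec:defi}), and the knot group of the trefoil is nontrivial, $T$ is nontrivial in both $\vSL_1$ and $\wSL_1$; on the other hand a single crossing change --- necessarily a \emph{self}-crossing change on one component --- unknots the trefoil, so $T$ is $\cc$-equivalent to the trivial long knot. This single example settles both the virtual and the welded cases simultaneously, for all $n\geq 1$ after stacking trivial strands. If you prefer to stay with Gauss diagram counts rather than the knot group, you would still need to exhibit an explicit diagram and compute the pairing; the trefoil again works (its standard Gauss diagram realizes the pattern defining $v_{2,2}$), but that computation must actually be carried out rather than postponed.
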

\begin{proof}
Consider the (usual) string link $T$ whose closure is the right-handed trefoil. 
As recalled in the introduction, the knot group is an invariant of both virtual and welded knots, which implies that $T$ is non trivial in both $\vSL_1$ and $\wSL_1$ 
(since the knot group of the trefoil is non trivial). 
However, $T$ is clearly $\cc$--equivalent to the trivial $1$-component string link. 
\end{proof}

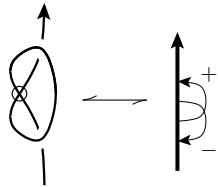
\begin{figure}[h!]
\[
\begin{array}{ccccc}
  \xymatrix{\dessin{3cm}{D_K}\ar@{^<-_>}[r]&\dessin{2.5cm}{GD_K}}
\end{array}
\]
\caption{The virtual string link $K$, and its Gauss diagram.} \label{fig:sl1} 
\end{figure}

Recall from \cite{Gold} that the canonical projection from the pure braid group to $\Phn$ is not injective.   The proof of Goldsmith actually applies to the virtual and welded context: 
\begin{lemme}\label{prop:wPh}
 For $n>2$, there are distinct virtual and welded pure braids which are $\cc$--equivalent,  \ie the canonical projections 
 $\xymatrix{\vPn\ar@{->>}[r]&\vPhn}$
and
 $\xymatrix{\wPn\ar@{->>}[r]&\wPhn}$
are not injective.  
\end{lemme}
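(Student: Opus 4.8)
The claim is a virtual/welded analogue of Goldsmith's classical fact that $\Pn \twoheadrightarrow \Phn$ is non-injective for $n>2$. The first thing I would do is recall Goldsmith's original construction: there is a pure braid $\beta\in\Pn$ which is a nontrivial commutator of ``band generators'' involving three distinct strands, and which becomes trivial once self-crossing changes are allowed — concretely, one can take (for $n=3$) an element built from the generators $A_{12},A_{13},A_{23}$ that lies in the appropriate term of the lower central series and whose nontriviality in $\Pn$ is detected by a Milnor-type $\mu$-invariant with no repeated index (a triple linking number $\mu_{123}$), while its triviality up to link-homotopy is automatic because $\Phn$ for $n=3$ is classified by the pairwise linking numbers and the (nonrepeating) triple invariant — and in fact the element chosen is specifically in the kernel of $\Pn\to\Phn$. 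For $n>3$ one simply includes this $n=3$ example into the first three strands.

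\textbf{Transporting the example.} The second step is to observe that this construction is entirely diagrammatic and hence transports to the virtual and welded settings via the embeddings $\Pn\hookrightarrow\vPn$ and $\Pn\hookrightarrow\wPn$ recalled after Definition~\ref{def:uStringLink} (from \cite{FRR}). So let $\beta_0\in\Pn$ be Goldsmith's element on three strands, included on the first three strands when $n>3$, and let $\beta=\iota(\beta_0)$ be its image in $\vPn$ (resp.\ $\wPn$). Triviality up to $\cc$-equivalence is immediate: $\beta_0$ is link-homotopically trivial as a \emph{usual} pure braid, and this link-homotopy is realized by a sequence of self-crossing changes on classical crossings, which is a legal $\cc$-equivalence in $\vPn$ (resp.\ $\wPn$) as well — so $\beta$ is $\cc$-equivalent to the trivial braid. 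The real content is therefore to show $\beta\neq 1$ in $\vPn$ and in $\wPn$.

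\textbf{Detecting nontriviality.} For this I would use a Gauss-diagram formula, as in the rest of the paper, rather than topology. The natural candidate is the virtual/welded extension of the triple Milnor invariant $\mu_{123}$: one writes down an arrow-diagram linear combination $F\in\AA_n$ supported on three arrows going between strands $1,2,3$ (the standard "$\mu_{123}$" arrow diagram together with the terms needed to make $d(F)=0$ and to kill the RI/RII anomalies), checks via Theorem~\ref{thm:mortier}(1)--(4) that $\langle F;-\rangle$ is a well-defined invariant of welded string links (hence also virtual), and evaluates it on $\beta$. Since $\beta$ is modeled on the classical $\mu_{123}$-nontrivial braid, $\langle F;\beta\rangle\neq 0$, while $\langle F;1\rangle=0$; this simultaneously handles the virtual and welded cases because $F$ can be chosen $\OC$-invariant. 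Alternatively — and this is the quicker route if one wants to cite rather than compute — one invokes that welded Milnor invariants are defined (the $\mu$-invariants used in Theorem~\ref{th:wSLh=AutC}) and that the nonrepeating ones are link-homotopy invariants agreeing with the classical ones on usual braids, so $\mu_{123}(\beta)=\mu_{123}(\beta_0)\neq 0$ in both $\vPn$ and $\wPn$, and a fortiori $\beta\neq 1$ there.

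\textbf{Main obstacle.} The delicate point is not the algebra but making sure the invariant one uses genuinely descends to $\vPn$ and $\wPn$ and genuinely separates $\beta$ from the trivial braid: a careless choice of arrow-diagram formula for "$\mu_{123}$" may fail the RIII condition $d(F)=0$ or fail $\OC$-invariance, or may vanish on $\beta$ for parity reasons. So the bulk of the work is either (i) exhibiting an explicit $F$ and verifying the four hypotheses of Theorem~\ref{thm:mortier} together with $\langle F;\beta\rangle\neq0$ by a direct subdiagram count, or (ii) carefully quoting the welded Milnor invariant machinery and checking its compatibility with the inclusion $\Pn\hookrightarrow\wPn$ — in either case the proof is then immediate. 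I would present the short version, citing the welded $\mu$-invariants (or, staying within the paper's Gauss-diagram language, the arrow-diagram formula for $\mu_{123}$), and leave the routine verification to the reader as the paper does elsewhere.
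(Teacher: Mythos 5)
Your overall strategy---transport Goldsmith's classical example into $\vPn$ and $\wPn$---is the same as the paper's, and your observation that the $\cc$--equivalence transports because self-crossing changes are local diagrammatic moves is exactly right. But your ``Detecting nontriviality'' step contains a genuine error. By construction, the two braids you must separate (equivalently, the element $\beta_0\in\ker(\Pn\to\Phn)$ you must detect) are link-homotopic, i.e.\ $\cc$--equivalent, hence also $\sv$--equivalent. Any non-repeating Milnor invariant such as $\mu_{123}$, and more generally any of the welded Milnor invariants underlying Theorem \ref{th:wSLh=AutC}, is an invariant of $\sv$--equivalence (indeed by Habegger--Lin the non-repeating $\mu$'s \emph{classify} $\Phn$). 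So these invariants necessarily vanish on $\beta_0$ and cannot distinguish the two Goldsmith braids; your assertion that $\mu_{123}(\beta)\neq 0$ is incompatible with $\beta$ being $\cc$--trivial, and your description of the example (nonzero $\mu_{123}$ yet link-homotopically trivial) is internally inconsistent. No Gauss-diagram formula that factors through $\cc$--equivalence can do this job; you would need an invariant that is a welded invariant but \emph{not} a link-homotopy invariant (e.g.\ a repeated-index invariant), which is a different and harder computation than the one you sketch.

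The fix is already in your own second step, and it is what the paper does: the maps $\Pn\hookrightarrow\wPn$ and $\Pn\hookrightarrow\vPn$ are \emph{injective} (the former because $\Pn\cong\AutC^0(\Fn)\subset\AutC(\Fn)\cong\wPn$ by Artin and \cite{FRR}, the latter because it factors the former), so the images of Goldsmith's two distinct pure braids remain distinct in $\vPn$ and $\wPn$ with no further invariant computation needed. If you simply invoke that injectivity instead of treating nontriviality as ``the real content'' requiring a new detector, your proof collapses to the paper's and is correct.
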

\begin{proof}
Consider the pure braids $G$ and $G'$ shown in Figure \ref{fig:Gold},  which (implicitly) appear in Figure 2 of \cite{Gold}. 
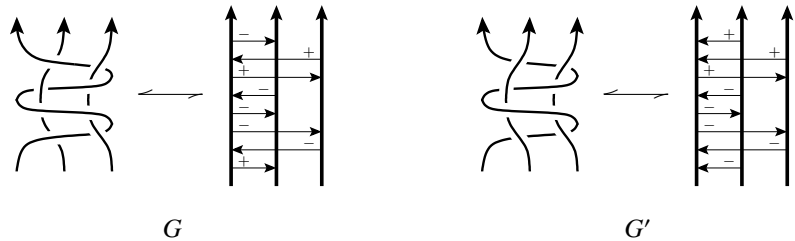
\begin{figure}[h!]
\[
\begin{array}{ccc}
 \xymatrix{\dessin{2.5cm}{D_A1}\ar@{^<-_>}[r]&\dessin{3cm}{GD_A1}}
  &\hspace{.7cm}&
\xymatrix{\dessin{2.5cm}{D_A2}\ar@{^<-_>}[r]&\dessin{3cm}{GD_A2}}\\
G && G'
\end{array}
\]
\caption{The virtual pure braids $G$ and $G'$, and their Gauss diagrams.} \label{fig:Gold}
\end{figure}
As shown there, these two pure braids are $\cc$--equivalent.  The result then follows by noting that usual braids embed injectively in $\vPn$ and in $\wPn$.  Indeed, the latter inclusion follows immediately from the interpretations of $\P_n$ and $\wPn$  in terms of automorphism groups of $\Fn$  (see the left rectangle in Figure \ref{fig:Connections}), and clearly implies the former. 
\end{proof}

More strikingly, although any $1$--component welded string link can be unknotted using crossing changes, this cannot always be achieved simultaneously for all strands of a welded string link with two or more components:
\begin{lemme}\label{prop:PSLh} 
For all $n>1$, there are virtual and welded string links which are not $\cc$--equivalent to any welded pure braid, \ie the inclusions $\xymatrix{\vPhn\ar@{^{(}->} [r]&\vSLhn}$ and  $\xymatrix{\wPhn\ar@{^{(}->} [r]&\wSLhn}$ are not surjective.  
\end{lemme}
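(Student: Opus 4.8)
The plan is to handle the virtual and welded statements by different means: the virtual one by forgetting strands, the welded one by a Gauss diagram formula. In both cases I would exhibit an explicit string link on which a suitable invariant is nonzero although that invariant vanishes on all (welded) pure braids.

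\medskip
\noindent\emph{Virtual case.} Take $L\in\vSLn$ whose first component is the Kishino-type virtual long knot $K_0$ of Lemma~\ref{factpipo} and whose other $n-1$ components are trivial and split. Forgetting all strands but the first is compatible with the generalized Reidemeister moves and with self-crossing changes — a self-crossing change on a forgotten strand disappears, while one on the first strand survives as a self-crossing change — so it induces a monoid homomorphism $\vSLhn\to\vSLh_1$. It sends $L$ to the class of $K_0$, which is nontrivial by Lemma~\ref{factpipo}, and it sends every element of $\vPhn$ into the image of $\vP_1=\{1\}$, hence to $1$. Thus $L\notin\vPhn$, for every $n\geq 2$.

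\medskip
\noindent\emph{Welded case.} Now $\wSLh_1\cong\{1\}$ by Lemma~\ref{factpipo}, so forgetting down to one strand is useless; instead I would use a Gauss diagram formula supported on the first two strands. The key point is that in a monotone diagram any two crossings between a fixed pair of strands yield two arrows whose four endpoints interleave along the concatenated strands — i.e.\ the two arrows \emph{cross} in the sense of Section~\ref{sec:GD} — and that the Gauss diagram of a monotone diagram carries no self-arrow at all. Hence, if $F\in\AA_n$ is a combination of arrow diagrams each containing a self-arrow, then $\langle F;-\rangle$ vanishes on every welded pure braid. Such self-arrows are unavoidable here: a formula with no self-arrow is invariant under self-virtualization by Theorem~\ref{thm:mortier}(6), and if it also vanished on all monotone diagrams it would be identically zero, since by Theorem~\ref{th:wSLh=wPh} every welded string link is $\sv$-equivalent to a pure braid.

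\medskip
So the concrete plan is to write down $F\in\AA_n$, supported on strands $1$ and $2$, of the form $D-D'$, where $D$ is an arrow diagram consisting of a single self-arrow $s$ on strand $1$ together with one arrow $m$ between strands $1$ and $2$, and $D'$ is obtained from $D$ by reversing $s$; then $\langle F;-\rangle$ is invariant under self-crossing changes by Theorem~\ref{thm:mortier}(5). Putting the strand-$1$ endpoint of $m$ between the two endpoints of $s$, and orienting $m$ from strand $2$ to strand $1$, arranges the hypotheses of Theorem~\ref{thm:mortier}(1), (2) and (4), so that $\langle F;-\rangle$ is also invariant under RI, RII and $\OC$. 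The delicate point — and the main obstacle — is invariance under RIII, i.e.\ the identity $d(F)=0$ in $\DD_n$: this is what pins down the precise shape of $D$, and verifying it amounts to computing the degeneration map $d$ on the few arrow diagrams occurring in $F$ and cancelling the resulting degenerate diagrams using the defining relations of $\DD_n$. Once such an $F$ is in place, it remains to produce an explicit welded string link $L$ on two strands — which will necessarily use virtual crossings — whose Gauss diagram contains exactly one copy of the sub-configuration read off by $F$, with signs chosen so that $\langle F;L\rangle=\pm 1\neq 0$ (for larger $n$, simply add trivial strands to $L$). Since $\langle F;-\rangle$ is a welded invariant, is unchanged under self-crossing changes, and vanishes on $\wPhn$, while every element of $\wPhn$ is invertible (being the image of the group $\wPn$), this yields $L\notin\wPhn$ and completes the argument for all $n\geq 2$.
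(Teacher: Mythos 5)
Your treatment of the virtual half is correct and genuinely different from the paper's. The paper handles both the virtual and welded statements at once with a single example (the $2$-string link $L$ of Figure \ref{fig:sl2}) and a single six-term Gauss diagram invariant $S_2$ that is $\cc$-invariant and vanishes on monotone diagrams. Your strand-deletion argument --- $\vSLhn\to\vSLh_1$ sends $\vPhn$ into $\vPh_1=\{1\}$ but sends the split union of $K_0$ with trivial strands to the nontrivial class of $K_0$ --- is more elementary and buys the virtual statement with no Gauss diagram formula at all; it is a valid alternative there.

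The welded half, however, has a genuine gap: the invariant that is supposed to do all the work is never actually constructed. Your strategic observations are sound (a monotone diagram has no self-crossings, so any formula all of whose terms contain a self-arrow vanishes on $\wPhn$ once it is a $\cc$-invariant of welded string links), but the existence of such a formula hinges entirely on the RIII condition $d(F)=0$ in $\DD_n$, which you explicitly defer. Your proposed ansatz $F=D-D'$, with $D$ a single self-arrow plus a single mixed arrow and $D'$ its self-arrow reversal, is almost certainly too small to satisfy it: computing $d$ on $D$ and $D'$ produces four degenerate diagrams (two per term, one for each internal edge on the first strand) which do not pair off and are not killed by the defining relations of $\DD_n$. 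This is consistent with the paper's own invariant $S_2$, which needs \emph{three} such reversal pairs --- six arrow diagrams in total --- precisely so that the contributions to $d$ cancel. So the ``main obstacle'' you name is not a routine verification but the actual content of the proof, and overcoming it forces a different (larger) formula than the one you specify. In addition, no explicit welded string link with $\langle F;L\rangle\neq0$ is exhibited, so even granting a suitable $F$, the welded statement of Lemma \ref{prop:PSLh} is not established by the proposal as written.
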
 
\begin{proof}
Consider the virtual $2$-string link $L$ of Figure \ref{fig:sl2} and the invariant $S_2: \vSL_2\rightarrow \Z$ defined by 
\[
S_2=\left\langle \DDBa - \DDBc - \DDCb + \DDCc - \DDDb  + \DDDc, - \right\rangle.
\]
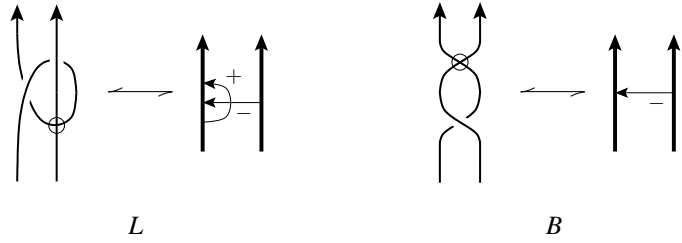
\begin{figure}[!]
\[
\begin{array}{ccc}
   \xymatrix{\dessin{3cm}{D_L}\ar@{^<-_>}[r]&\dessin{2.2cm}{GD_L}}
   &\hspace{.7cm}&
   \xymatrix{\dessin{3cm}{D_Bp}\ar@{^<-_>}[r]&\dessin{2.2cm}{GD_Bp}}\\
   L && B
\end{array}
\]
\caption{The virtual $2$-string links $L$ and $B$, and their Gauss diagrams.} \label{fig:sl2}
\end{figure}
Notice that $S_2$ is an invariant of $\cc$-equivalence by Theorem \ref{thm:mortier}.  Note also
that $S_2$ does detect $L$, since $S_2(L)=1$. Now assume that $L$
is $\cc$-equivalent to a pure braid. Since a pure braid
admits a representative whose Gauss diagram has only horizontal
arrows, and since the defining formula for $S_2$ contains no such diagram with only horizontal arrows,
we would have $S_2(L)=0$.  This proves that $L$ is not $\cc$-equivalent to a pure braid.
The result in the welded case follows by noting that $S_2$ is also a welded invariant, using Theorem \ref{thm:mortier}.
\end{proof}

\subsection{Comparing $\cc$ and $\sv$--equivalences}

We now compare the $\cc$-equivalence and the $\sv$-equiva\-len\-ce for welded knotted objects.  

The $1$-component case is again rather trivial. 
As seen in Lemma \ref{factpipo}, the $\cc$-equivalence yields different quotients on $\vSL_1$ and $\wSL_1$. 
The $\sv$-equivalence, on the other hand, trivializes both: $\vSLH_1\cong \wSLH_1\cong\{1\}$. 
Indeed, virtualizing all crossings of a welded (or virtual) long knot always yields the trivial element.

For $n>1$, the situation is different:
\begin{lemme} \label{prop:wSLhH}
For all $n>1$, there are virtual and welded string links which are $\sv$--equivalent but not $\cc$-equivalent,  \ie the canonical projections 
 $\xymatrix{\vSLhn\ar@{->>}[r]&\vSLHn}$
and
 $\xymatrix{\wSLhn\ar@{->>}[r]&\wSLHn}$
 are not injective.  
\end{lemme}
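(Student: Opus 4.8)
The plan is to produce, for each $n>1$, a single virtual string link which is $\sv$-equivalent to the trivial one but not $\cc$-equivalent to it. Since $\cc$-equivalence refines $\sv$-equivalence, this exhibits two distinct $\cc$-classes with the same image in $\vSLHn$ (respectively $\wSLHn$), which is exactly the asserted failure of injectivity of both projections. Concretely, I would take the virtual $2$-string link $B$ of Figure~\ref{fig:sl2}, together with $n-2$ extra trivial strands when $n>2$, and check the two properties in turn. All the moves used to trivialise $B$ up to $\sv$-equivalence will be Reidemeister moves and self-virtualisations only, never $\OC$ or $\TC$, so every statement proved in the virtual category holds verbatim in the welded one.

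For the $\sv$-triviality of $B$: on Gauss diagrams, self-virtualisation is the deletion of a self-arrow, so it is enough to reduce $G_B$ to the empty diagram using such deletions together with the moves RI, RII, RIII. I would read this directly off Figure~\ref{fig:sl2}: once the self-arrow(s) of $G_B$ are deleted, the remaining arrows --- those encoding the virtual entanglement of the two strands --- cancel in pairs by RII, and the diagram becomes trivial.

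For the fact that $B$ is \emph{not} $\cc$-equivalent to the trivial string link: I would use the invariant $S_2$ introduced in the proof of Lemma~\ref{prop:PSLh}. By Theorem~\ref{thm:mortier} it is a genuine invariant of both virtual and welded string links and is invariant under self-crossing change, hence factors through $\vSLhn$ and $\wSLhn$; when $n>2$ one applies it after deleting the additional strands, or equivalently reads it as the same formula on $n$ strands, all of whose diagrams are supported on strands $1$ and $2$. A direct count of the relevant signed subdiagrams of $G_B$ should yield $S_2(B)\neq 0=S_2(\mathbf 1)$, which forbids $B$ from being $\cc$-equivalent to the trivial string link, in both the virtual and the welded settings. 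Combining the two points completes the argument.

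The delicate step is the first one, namely the \emph{choice} of $B$. A diagram whose classical crossings are all self-crossings would be $\sv$-trivial for free; but in the welded world such a link splits as a product of $\cc$-trivial welded long knots by Lemma~\ref{factpipo}, hence would be $\cc$-trivial too. So $G_B$ must involve inter-component arrows, and its $\sv$-triviality has to arise precisely from an RII cancellation that is unlocked by deleting a self-arrow yet stays obstructed under mere self-crossing changes. Arranging this mechanism while keeping $S_2(B)$ nonzero is the real content of the lemma; once $B$ and $G_B$ are pinned down, verifying the RII cancellations and evaluating $S_2$ are routine subdiagram bookkeeping.
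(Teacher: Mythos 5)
Your overall strategy --- exhibit a single string link that is $\sv$-trivial but not $\cc$-trivial --- would indeed suffice, but your concrete choice defeats it: the object $B$ of Figure \ref{fig:sl2} is a \emph{pure braid}, and that is fatal on both counts. First, a monotone diagram has no self-crossings (each strand meets every horizontal level exactly once), so $G_B$ contains no self-arrow to delete; your proposed $\sv$-trivialisation ``delete the self-arrow(s), then cancel the rest by RII'' has nothing to act on, and there is no reason for $B$ to be $\sv$-trivial --- on the contrary, $\wSLHn\cong\AutC(\RFn)$ is non-trivial and is generated by the classes of pure braids, so a generic pure braid is $\sv$-non-trivial. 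Second, and independently, the proof of Lemma \ref{prop:PSLh} shows that $S_2$ vanishes on \emph{every} pure braid (its defining combination contains no diagram with only horizontal arrows), so $S_2(B)=0$ and $S_2$ cannot certify that $B$ fails to be $\cc$-trivial. Your closing paragraph correctly identifies the delicate point --- the example must have inter-component arrows whose cancellation is unlocked only by deleting a self-arrow --- but the candidate you chose does not have this feature.

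The paper's proof sidesteps both problems by comparing \emph{two} string links rather than one against the trivial one: it takes the (non-monotone) $L$ and the pure braid $B$ of Figure \ref{fig:sl2}, shows that $L$ is $\sv$-equivalent to $B$, and reuses Lemma \ref{prop:PSLh} verbatim: $S_2(L)=1$ while $S_2$ vanishes on all pure braids, so $L$ is not $\cc$-equivalent to $B$ (nor to any pure braid). The pair $(L,B)$ then witnesses the non-injectivity of $\vSLhn\twoheadrightarrow\vSLHn$ and $\wSLhn\twoheadrightarrow\wSLHn$, in both the virtual and welded settings, without ever needing an $\sv$-trivial but $\cc$-non-trivial example. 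To salvage your one-object version you would have to produce a different string link, one with self-arrows whose deletion genuinely trivialises it, together with a $\cc$-invariant that does not vanish on it; that is not supplied by the paper and is not obviously easier than exhibiting the pair $(L,B)$.
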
 
\begin{proof}
Consider the welded $2$-string links $L$ and $B$ in Figure \ref{fig:sl2}. 
As shown in the proof of Lemma \ref{prop:PSLh}, $L$ is not $\cc$-equivalent to a pure braid.  
However, $L$ is equivalent, up to self-virtualization to the pure braid $B$.  
As for Lemma \ref{prop:PSLh}, the argument applies to both the virtual and welded context.  
\end{proof}

This remains true when restricting to pure braid groups:
\begin{lemme}\label{purebraids}
For all $n>1$, there are virtual and welded pure braids which are $\sv$--equivalent but not $\cc$--equivalent,  
\ie the surjective maps 
 $\xymatrix{\vPhn\ar@{->>}[r]&\vPHn}$
and
 $\xymatrix{\wPhn\ar@{->>}[r]&\wPHn}$
 are not injective.  
\end{lemme}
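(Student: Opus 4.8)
The statement parallels Lemma~\ref{prop:wSLhH}, but now we need examples that already live in the pure braid groups $\vPn$ and $\wPn$, not just in the string link monoids. The natural strategy is to find two virtual (hence welded) pure braids $P$ and $P'$ on $n$ strands which are $\sv$--equivalent but can be distinguished by a Gauss diagram formula invariant of $\cc$--equivalence. As in the proof of Lemma~\ref{prop:PSLh}, such an invariant can be built from arrow diagrams, and the key feature to exploit is that pure braids have representatives whose Gauss diagrams consist only of horizontal arrows: any invariant whose defining linear combination contains no diagram with only horizontal arrows will, when restricted to pure braids, be sensitive precisely to how the braid departs from its horizontal normal form — but two $\sv$--equivalent braids need not share such a value.

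First I would reduce to $n=2$: a pair of $2$-string links $L, B$ as in Figure~\ref{fig:sl2} that are $\sv$--equivalent was already produced, and the invariant $S_2$ of Lemma~\ref{prop:PSLh} distinguishes them as $\cc$--equivalence classes. The obstruction is that $L$ there is not a pure braid. So the real task is to replace $L$ by an honest virtual pure braid $P \in \vP_2$ which is still $\sv$--equivalent to some pure braid $P'$ but has $S_2(P) \neq S_2(P')$, or failing that, to exhibit a different invariant adapted to braids. A clean way to do this is to take $P$ to be a monotone (braid) representative obtained by "combing" $L$ — i.e. find $P \in \vP_2$ whose image in $\vSL_2$ has the same value of $S_2$. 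More robustly: since $S_2$ is an invariant of $\cc$--equivalence on all of $\vSL_2$ (and $\wSL_2$), it suffices to exhibit \emph{any} virtual pure braid $P$ with $S_2(P) \neq 0$ together with an $\sv$--equivalent pure braid $P'$ (for which automatically $S_2(P') = 0$, being $\cc$--equivalent, hence equal in $\vSLh_2$, to a usual pure braid whose Gauss diagram has only horizontal arrows). For $n>2$, the examples embed by adding trivial strands, and the invariant extends by the analogue of $S_2$ on the relevant pair of strands.

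The main obstacle is the first step: producing a genuinely monotone diagram carrying the required nonzero value of $S_2$ (or an analogous formula), since the obvious Kishino-type and string-link examples used elsewhere in the paper are not braids. I would address this by writing down an explicit virtual pure braid word — a short product of virtual generators $\sigma_i, \tau_i$ in $\vP_2$ — drawing its Gauss diagram, checking by Theorem~\ref{thm:mortier} that $S_2$ (suitably chosen among the formulae of Lemma~\ref{prop:PSLh}, possibly modified so that all its defining diagrams have vertical arrows and survive the TC move) is indeed a well-defined $\cc$- and $\OC$-invariant, and then computing $S_2(P)$ directly from the subdiagram count. The $\sv$--equivalent pure braid $P'$ is then obtained by virtualizing the single self-crossing that produces the "vertical" arrow; one checks $P'$ is a usual pure braid up to the moves, so $S_2(P')=0$ by the horizontal-arrow argument already used in Lemma~\ref{prop:PSLh}. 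Finally I would note, as in the previous lemmas, that the entire argument is insensitive to whether one works virtually or weldedly, since the chosen formula is an $\OC$-invariant.
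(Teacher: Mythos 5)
Your overall strategy---two $\sv$--equivalent pure braids separated by a Gauss diagram formula that is invariant under $\cc$--equivalence---is exactly the paper's, but your plan collapses at its central step. The invariant $S_2$ of Lemma~\ref{prop:PSLh}, and any formula sharing its key feature that the defining linear combination contains no arrow diagram realizable by horizontal arrows only, vanishes \emph{identically} on $\vP_2$ and $\wP_2$: a pure braid has a monotone representative, its Gauss diagram then consists only of horizontal arrows, and no subdiagram of such a Gauss diagram can match any term of the formula. This vanishing is precisely what made $S_2$ useful in Lemma~\ref{prop:PSLh} (to certify that $L$ is not $\cc$--equivalent to a braid), and it is precisely what makes it useless here: there is no virtual pure braid $P$ with $S_2(P)\neq 0$, so the quantity you propose to compute can never distinguish two pure braids, and the ``main obstacle'' you identify cannot be overcome by any choice of braid word. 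The paper instead introduces a genuinely different invariant $Q_2$, whose defining combination \emph{does} contain configurations consisting of horizontal arrows only, and evaluates it on the explicit welded pure braids $T$ and $T'$ of Figure~\ref{fig:pure}, obtaining $Q_2(T)=1\neq -1=Q_2(T')$.

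A second, related confusion: you propose to obtain the $\sv$--equivalent partner $P'$ by ``virtualizing the single self-crossing'' of $P$. A monotone diagram has no self-crossings at all, since each strand of a monotone diagram meets each horizontal level exactly once and hence cannot cross itself; so there is nothing to virtualize in a braid representative. An $\sv$--equivalence between two distinct pure braids must pass through non-monotone diagrams, as in Figure~\ref{fig:TTb}: one first performs classical Reidemeister moves to create a pair of self-crossings, virtualizes those, and then removes the resulting virtual crossings by detour moves to land back on a monotone diagram. Without an explicit pair of braids, an invariant that is not identically zero on monotone diagrams, and this non-monotone intermediate step, the argument is not complete.
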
 
\begin{proof}
Let $T$ and $T'$ be the welded pure braids shown in Figure \ref{fig:pure}.
 \begin{figure}[!h]
\[
\begin{array}{ccc}
  \xymatrix{\dessin{3cm}{D_T}\ar@{^<-_>}[r]&\dessin{2.2cm}{GD_T}}
  &\hspace{.7cm}&
    \xymatrix{\dessin{3cm}{D_Tp}\ar@{^<-_>}[r]&\dessin{2.2cm}{GD_Tp}}\\
  T && T'
\end{array}
\]
\caption{The welded pure braids $T$ and $T'$, and their Gauss diagrams.} \label{fig:pure}
\end{figure}
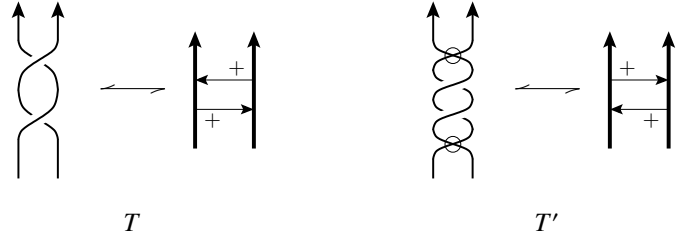 
On one hand, we have that $T$ and $T'$ are $\sv$--equivalent. 
This is shown in Figure \ref{fig:TTb} below; 
in this figure, the first move is achieved by a sequence of classical Reidemeister moves, 
the second is a pair of self-virtualizations, 
the third is a pair of detour moves, and the final move is a planar isotopy.
 \begin{figure}[!h]
\[
\begin{array}{ccc}
  \xymatrix{\dessin{2.5cm}{SOS_1}\ar@{<->}[r]^{\textrm{}}&\dessin{2.5cm}{SOS_2}\ar@{<->}[r]^{\textrm{}}&\dessin{2.5cm}{SOS_3}\ar@{<->}[r]^{\textrm{}}&\dessin{2.5cm}{SOS_4}\ar@{<->}[r]^{\textrm{}}&\dessin{2.5cm}{SOS_5}}
\end{array}
\]
\caption{The welded pure braids $T$ and $T'$ are $\sv$-equivalent.} \label{fig:TTb}
\end{figure}
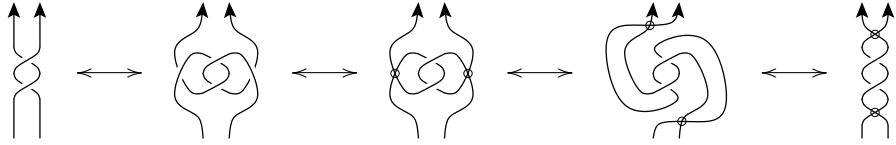 
Note that, at the Gauss diagram level, this is merely an instance of a more general result on commutation of arrows supported by two strands, stated in \cite[Prop.~4.11]{vaskho}.  

On the other hand, $T$ and $T'$ are not $\cc$-equivalent.
This can be checked using the invariant $Q_2$ defined by the formula
\[
 Q_2=\left\langle \DDAa - \DDAb + \DDCb - \DDCc + \DDDc  - \DDDb, - \right\rangle. 
\]
\noi By Theorem \ref{thm:mortier}, we have that $Q_2$ is an invariant of welded $2$-string links up to $\cc$-equivalence,  
and it is straightforwardly checked that $Q_2(T)=1$, while $Q_2(T')=-1$. 
\end{proof}

Lemma \ref{prop:wSLh} readily implies that the canonical projections 
$\vSLn\to \vSLHn$ and $\wSLn\to \wSLHn$ 
 are not injective for $n\ge 1$, and   
the same observation holds for $\vPn\to \vPHn$ and $\wPn\to \wPHn$ 
by Lemma \ref{prop:wPh}, for $n>2$.   
Actually, in the welded pure braid case, this remains true for $n=2$: 
\begin{lemme}\label{pipolino}
We have $\wP_2\ncong \wPH_2$. 
\end{lemme}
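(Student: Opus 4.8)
\emph{Proof proposal.}

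The plan is to leverage the fact that $\wP_2$ is a free group of rank two, hence Hopfian, and therefore cannot be isomorphic to any of its \emph{proper} quotients; since $\wPH_2$ is by definition a quotient of $\wP_2$, it will then suffice to check that this quotient is proper, and that is already contained in Lemma~\ref{purebraids}.

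First I would record that $\wP_2$ is free of rank two. In general $\wP_n\cong\AutC(\Fn)$, McCool's group of basis‑conjugating automorphisms of $\Fn$ (see \cite{FRR,WKO1}); for $n=2$, every relation in McCool's presentation involves at least three distinct indices, so none of them applies, and $\wP_2$ is free on the two ``conjugation'' generators $x_1\mapsto x_2^{-1}x_1x_2$ and $x_2\mapsto x_1^{-1}x_2x_1$, i.e.\ $\wP_2\cong\F_2$. (One could equally run a Reidemeister--Schreier computation on the presentation $\langle\sigma,\rho\mid\rho^2\rangle$ of the welded braid group on two strands, with the same outcome.) In particular $\wP_2$ is Hopfian: every surjective group endomorphism of $\wP_2$ is an automorphism.

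Next I would observe that the canonical projection $p\colon\wP_2\twoheadrightarrow\wPH_2$, which is onto by construction, fails to be injective. By Lemma~\ref{purebraids} with $n=2$, the welded pure braids $T$ and $T'$ of Figure~\ref{fig:pure} are $\sv$--equivalent, so $p(T)=p(T')$, yet they are not $\cc$--equivalent and hence, \textit{a fortiori}, represent distinct elements of $\wP_2$. Thus $N:=\ker p$ is a nontrivial normal subgroup and $\wPH_2\cong\wP_2/N$ is a proper quotient of $\wP_2$. If there were an isomorphism $\varphi\colon\wPH_2\to\wP_2$, then $\varphi\circ p\colon\wP_2\to\wP_2$ would be a surjective endomorphism with kernel $N\ne\{1\}$, contradicting Hopfianness; therefore $\wP_2\ncong\wPH_2$.

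As a more computational alternative, which also pins down the quotient, one can use Theorem~\ref{th:wSLh=AutC} to get $\wPH_2\cong\AutC(\RF_2)$, identify $\RF_2$ with the integral Heisenberg group (the conjugates of $x_i$ in $\RF_2$ are exactly the cosets $x_i[x_1,x_2]^k$, $k\in\Z$), and compute $\AutC(\RF_2)\cong\Z^2$; since $\wP_2\cong\F_2$ is nonabelian, this again gives $\wP_2\ncong\wPH_2$. I expect the only step requiring genuine care to be the identification $\wP_2\cong\F_2$ (equivalently $P\Sigma_2\cong\F_2$), which is classical; after that the argument is formal, the role of Lemma~\ref{purebraids} being simply to guarantee that the $\sv$--relation does something nontrivial on $\wP_2$.
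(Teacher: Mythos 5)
Your proof is correct, but your primary argument takes a genuinely different route from the paper's. The paper proves the lemma by explicitly computing the target: every element of $\AutC(\RF_2)$ has the form $x_1\mapsto x_2^{\eta_1}x_1x_2^{-\eta_1}$, $x_2\mapsto x_1^{\eta_2}x_2x_1^{-\eta_2}$, and these compose additively, so $\wPH_2\cong\AutC(\RF_2)\cong\Z^2$ is abelian while $\wP_2\cong\F_2$ is not. You instead argue formally: $\wP_2\cong\F_2$ is Hopfian, the projection $\wP_2\twoheadrightarrow\wPH_2$ is surjective but (by Lemma~\ref{purebraids}, via the pair $T,T'$ and the invariant $Q_2$) not injective, and a Hopfian group is never isomorphic to a proper quotient of itself. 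This is valid --- finitely generated free groups are residually finite, hence Hopfian, and your reduction of $T\ne T'$ in $\wP_2$ from their non-$\cc$-equivalence is sound --- and it has the advantage of requiring no computation in $\AutC(\RF_2)$ at all; it is close in spirit to the remark following the lemma in the paper, which also leans on $Q_2$, though that remark uses abelianness of $\wPH_2$ rather than Hopficity. What your main argument does not deliver, and the paper's does, is the identification $\wPH_2\cong\Z^2$, which is reused elsewhere (e.g.\ in the discussion of $\vSLH_2\cong\wSLH_2\cong\Z^2$). Your ``computational alternative'' paragraph is essentially the paper's proof, with the additional (correct) observation that $\RF_2$ is the integral Heisenberg group; note only that the paper writes $\eta_1,\eta_2\in\N$ where $\Z$ is clearly intended, and your version gets this right.
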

\begin{proof}
One can easily prove that any automorphism in $\Aut_C(\RF_2)$ can be written as $\xi_{\eta_1,\eta_2}$
for some $\eta_1, \eta_2 \in \N$, where $\xi_{\eta_1,\eta_2} (x_1)=x_2^{\eta_1} x_1 x_2^{-\eta_1}$
and $\xi_{\eta_1,\eta_2}(x_2)= x_1^{\eta_2} x_2 x_1^{-\eta_2}$, and that $\xi_{\eta_1,\eta_2} \xi_{\eta_3,\eta_4}
=\xi_{\eta_1+\eta_3,\eta_2+\eta_4}$. 
This implies that $\wPH_2\cong \Aut_C(\RF_2)\cong \Z^2$, while it is well-known that  $\wP_2=\F_2$ (see for instance \cite{FRR}).
\end{proof}
\begin{remarque}
Lemma \ref{pipolino} can also be proved using the invariant $Q_2$ used for Lemma \ref{purebraids}. 
Indeed, it already follows from \cite[Prop.~4.11]{vaskho} that $\wPH_2$ is abelian, so it suffices to show that this is not the case for $\wP_2$. 
This is a consequence of the fact that $Q_2$ distinguishes the welded pure braids $T$ and $T'$ of Figure \ref{fig:pure}. 
More generally, since $Q_2$ is an invariant of $\cc$-equivalence, we have from this observation that none of $\vP_2$, $\vPh_2$, $\wP_2$ and $\wPh_2$ is abelian.
\end{remarque}

\subsection{Comparing virtual and welded objects}

As pointed out in \cite[Sec.~3.1]{WKO1}, welded long knots are strictly weaker than virtual long knots, 
in the sense that there exist non trivial virtual long knots which are trivial up to OC. 
This implies that, more generally, we have 
\begin{lemme}\label{lem:vwSL}
For all $n\geq1$, there are distinct virtual string links which are
equal as welded string links, \ie $\xymatrix{\vSLn\ar@{->>}[r]^(.41){\ncong}&\wSLn}$.  
\end{lemme}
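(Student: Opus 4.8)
The plan is to note first that the projection $\vSLn\to\wSLn$ is obviously surjective, since every welded string link is represented by a virtual string link diagram; so it suffices to exhibit, for each $n\geq1$, a virtual string link that is non trivial in $\vSLn$ but maps to the trivial element of $\wSLn$.

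For $n=1$ the required example is already available in the excerpt: the virtual long knot $K$ of Figure \ref{fig:sl1} is trivial in $\wSL_1$, whereas $v_{2,2}(K)=-1$, so $K$ is non trivial in $\vSL_1$. (This is the ``long'' incarnation of the fact, recalled from \cite[Sec.~3.1]{WKO1}, that welded long knots are strictly weaker than virtual long knots.) For $n>1$, I would build $K^{(n)}\in\vSLn$ from the trivial $n$-component string link by inserting a copy of the diagram of $K$ inside a small band on, say, the first strand (a local connected sum). Triviality of $K^{(n)}$ in $\wSLn$ is then inherited from that of $K$ in $\wSL_1$: the finite sequence of classical Reidemeister, detour and $\OC$ moves trivializing $K$ takes place inside that band, hence applies verbatim to $K^{(n)}$.

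To see that $K^{(n)}$ is non trivial in $\vSLn$, I would use that deleting a strand of a virtual string link diagram is compatible with isotopy and with all generalized Reidemeister moves, hence induces a well-defined monoid homomorphism $\vSLn\to\vSL_{n-1}$; composing $n-1$ such maps yields a homomorphism $\vSLn\to\vSL_1$ that forgets every strand but the first, sends $K^{(n)}$ to $K$ and the trivial element to the trivial element, and therefore detects $K^{(n)}$. Alternatively, and more in the spirit of the paper, one could evaluate on $K^{(n)}$ the Gauss diagram formula $\langle F;-\rangle$ whose defining combination $F$ is the one of $v_{2,2}$ drawn on the first strand; this is a virtual string link invariant by Theorem \ref{thm:mortier}, vanishes on the trivial string link, and equals $-1$ on $K^{(n)}$.

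There is no serious obstacle here: the whole argument rests on facts already recorded in the excerpt (welded triviality of $K$, the value $v_{2,2}(K)=-1$, and Theorem \ref{thm:mortier}). The only point deserving a word of care is the well-definedness of the strand-deletion homomorphism on $\vSLn$ — or, if one prefers the invariant-theoretic route, the observation that the RIII condition $d(F)=0$ is unaffected by placing the defining diagram of $v_{2,2}$ among $n$ strands, which holds because $d$ is computed locally near internal edges.
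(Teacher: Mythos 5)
Your proposal is correct and follows essentially the same route as the paper, which derives the lemma from the existence of a non trivial virtual long knot that becomes trivial up to $\OC$ (citing \cite[Sec.~3.1]{WKO1}) and leaves the passage from $n=1$ to general $n$ implicit. You simply make that passage explicit — and render the argument self-contained by using the paper's own example $K$ of Figure \ref{fig:sl1} together with $v_{2,2}$ instead of the example from \cite{WKO1} — with both of your suggested routes (strand deletion, or the formula $v_{2,2}$ placed on the first strand) being valid ways to certify non triviality in $\vSLn$.
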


\begin{remarque}\label{rem:knottedness}
It may be worth mentioning that knottedness of each individual component is not the only obstruction to having $\vSLn$ isomorphic to $\wSLn$. 
Actually, the following example shows that this also can't be valid among string links with trivial components. 
Let $L'$ be the virtual $2$-string link shown in Figure \ref{fig:sl2bis}, which is equivalent in $\wSL_2$ to the virtual pure braid $B'$ represented in the same figure. 
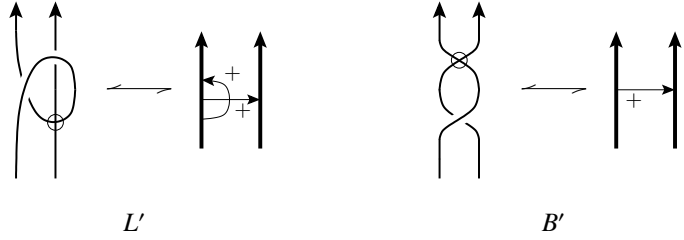
\begin{figure}[!]
\[
\begin{array}{ccc}
  \xymatrix{\dessin{3cm}{Diag_L}\ar@{^<-_>}[r]&\dessin{2.2cm}{GDiag_L}}
  &\hspace{.7cm}&
  \xymatrix{\dessin{3cm}{D_B}\ar@{^<-_>}[r]&\dessin{2.2cm}{GD_B}}\\
  L' && B'
\end{array}
\]
\caption{The virtual $2$-string links $L'$ and $B'$, and their Gauss diagrams.} \label{fig:sl2bis}
\end{figure}
Now, let $V_2: \vSL_2\rightarrow \Z$ be the invariant introduced in \cite{jbjktr} and defined by the Gauss diagram formula
\[
V_2=\left\langle \DDBa - \DDDa - \DDCd, - \right\rangle.
\]
We have that $V_2(L')=1$, whereas $V_2(B')=0$.

The invariant $V_2$ can moreover be symmetrized into
\[
V^*_2=\left\langle \DDBa -\DDBc - \DDDa +\DDDf - \DDCa +\DDCd, - \right\rangle.
\]
\noi which, by Theorem \ref{thm:mortier}, is invariant under self-crossing change. The same example
proves then that $\vSLhn\ncong\wSLhn$ for $n\geq2$.
\end{remarque}

By comparing the corresponding group presentations, given for instance
in \cite[Sec.~2]{WKO1}, it can be seen that the group $\vP_2$ is isomorphic to $\wP_2$. 
This remains true up to $\cc$-equivalence, i.e. we have $\vPh_2\cong \wPh_2$. 
Up to $\sv$-equivalence, the isomorphism even holds for string links, i.e. we have $\vSLH_2\cong \wSLH_2$, as a consequence of \cite[Prop.~4.11]{vaskho}.   

However, as soon as the number of strands is greater than 3, it is a general
fact that, even up to $\cc$ or $\sv$--equivalence, the virtual and
welded quotients are actually distinct: 
\begin{lemme}\label{prop:WeldedGenuine}
  For all $n>2$, there are virtual pure braids
  which are distinct, even up to $\cc$ or $\sv$-equivalence, but are
  equivalent as welded pure braids. 
\end{lemme}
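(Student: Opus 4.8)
Lemma~\ref{prop:WeldedGenuine} asserts that for $n>2$, there are virtual pure braids distinguished even up to $\cc$ or $\sv$-equivalence which become equal as welded pure braids. The plan is to exhibit an explicit pair of virtual pure braids whose difference is detected by a Gauss diagram invariant that is simultaneously invariant under $\cc$-equivalence (hence also under $\sv$-equivalence, by point (6) of Theorem~\ref{thm:mortier} when the formula is chosen to have no self-arrows) but which is \emph{not} invariant under $\OC$. Since $n>2$, the natural place to look is a Gauss diagram supported on three distinct strands, because the whole point is to use a configuration that cannot be simplified on two strands (recall from \cite[Prop.~4.11]{vaskho} that arrows on two strands commute, which is exactly why $n=2$ is excluded).

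The first step is to produce the candidate braids. I would take two virtual pure braids $W$ and $W'$ on three strands whose Gauss diagrams differ precisely by a move swapping the relative positions of two arrow tails lying on a common strand but having their heads on two \emph{different} other strands; this is a TC-type move, hence $W$ and $W'$ agree in $\wP_n$. For $n>3$ one simply embeds this picture into the first three strands and adds trivial extra strands. The second step is to construct an arrow-diagram formula $F$, with no self-arrows and built so that every self-arrow-free reordering condition in point (5) of Theorem~\ref{thm:mortier} is vacuously satisfied, such that $\langle F;-\rangle$ is invariant under RI, RII, RIII (using Mortier's criterion, i.e.\ checking $d(F)=0$ in $\DD_n$ after verifying RII-invariance), and such that $F$ contains a diagram with two adjacent arrow tails on one strand — so that the OC-obstruction of point (4) genuinely fails. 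Then one computes $\langle F;W\rangle\neq\langle F;W'\rangle$ by a direct count of subdiagrams, while noting $\langle F;-\rangle$ is constant on $\cc$- and $\sv$-equivalence classes.

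The key steps, in order, are: (i) draw $W$, $W'$ and their Gauss diagrams, and verify $W=W'$ in $\wP_n$ by exhibiting the TC move (equivalently, a sequence of detour and classical Reidemeister moves as in Figure~\ref{fig:TTb}); (ii) write down $F$ and invoke Theorem~\ref{thm:mortier} to see that $\langle F;-\rangle$ descends to an invariant of virtual string links up to $\cc$-equivalence; (iii) evaluate $\langle F;W\rangle$ and $\langle F;W'\rangle$ and observe they differ; (iv) conclude that $W$ and $W'$ are distinct in $\vP_n/\!\!\sim_{\cc}$ and a fortiori in $\vP_n/\!\!\sim_{\sv}$ is the wrong direction — rather, pick $F$ with \emph{no} self-arrows so the same formula is $\sv$-invariant, giving the $\sv$ statement directly; otherwise one must separately check that $W$ and $W'$ remain distinct up to $\sv$-equivalence, for instance via a Milnor-type invariant surviving in $\wPH_n\cong\AutC(\RF_n)$ restricted appropriately — but since $W=W'$ welded, that approach alone cannot separate them, so the self-arrow-free invariant route is essential.

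The main obstacle I anticipate is step (ii): finding a formula $F$ that threads the needle — no self-arrows (for $\sv$-invariance and for the $\cc$-invariance hypothesis of point (5) to hold trivially), $d(F)=0$ in $\DD_n$ so that RIII-invariance holds, RI- and RII-invariance from the adjacency conditions, yet containing adjacent tails so that OC-invariance genuinely fails and $\langle F;W\rangle\neq\langle F;W'\rangle$. Balancing the $d$-relations in $\DD_n$ across three strands is the delicate combinatorial computation; in practice one writes $F$ as an alternating sum over the two or three cyclic arrangements of a three-arrow (or two-arrow) configuration on the three strands, mimicking the construction of $S_2$ and $Q_2$ used in Lemmas~\ref{prop:PSLh} and~\ref{purebraids}, and checks the cancellation of internal-edge terms by hand. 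Once $F$ is in hand, everything else is a bounded check.
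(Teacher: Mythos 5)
Your proposal follows essentially the same route as the paper: the paper's proof takes an explicit pair of $3$-strand virtual pure braids $C$ and $C'$ related by a single Tails Commute move (hence equal in $\wP_3$) and distinguishes them by a self-arrow-free Gauss diagram formula $M_2$ that is, by Theorem~\ref{thm:mortier}, invariant under self-crossing change and self-virtualization but not under $\OC$. The only thing you leave implicit --- the explicit three-strand formula with $d(F)=0$ and adjacent tails --- is exactly what the paper supplies as $M_2$, so your outline is correct and matches the paper's argument.
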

 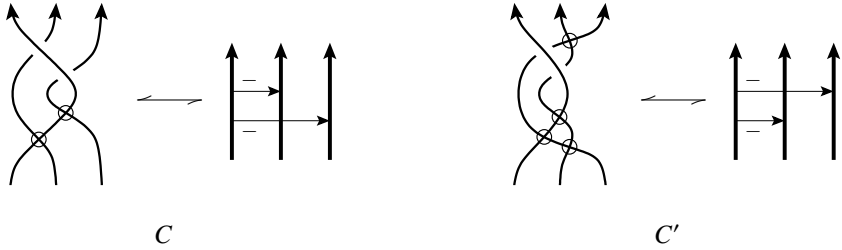
\begin{figure}[h!]
\[
\begin{array}{ccc}
  \xymatrix{\dessin{3cm}{D_C}\ar@{^<-_>}[r]&\dessin{2.2cm}{GD_C}}
  &\hspace{.7cm}&
    \xymatrix{\dessin{3cm}{D_Cp}\ar@{^<-_>}[r]&\dessin{2.2cm}{GD_Cp}}\\
  C && C'
\end{array}
\]
\caption{The virtual pure braids $C$ and $C'$, and their Gauss diagrams.} \label{fig:CC'}
\end{figure} 
\noindent   It follows that the welded projections of
 $\vPn$, $\vPhn$, $\vPHn$, $\vSLn$, $\vSLhn$ and $\vSLHn$ are all
 non injective. 
\begin{proof}
Consider the virtual pure braids $C$ and $C'$ shown in Figure \ref{fig:CC'}. 
As obvious from the Gauss diagram point of view, they are equivalent in $\wP_3$, 
but they are distinct in $\vP_3$, $\vPh_3$ and $\vPH_3$. 
Indeed, the virtual string link invariant 
\[
M_2=\left\langle \DDEa - \DDEb - \DDEc, - \right\rangle,
\]
\noi which by Theorem \ref{thm:mortier} is invariant under self-crossing change and self-virtualization,
satisfies $M_2(C)=1$ but $M_2(C')=0$. 
\end{proof}

\section{Some open questions}
\label{sec:open-questions}

All the connections between the different notions and quotients of
string links are summarized in Figure \ref{fig:Connections}. 
However, several questions remain open, and some are listed below. 

\begin{question}\label{q:q}
Is the inclusion map $\vPn\to\vSLn$ injective ?
\end{question}
\noindent As noted in Remark \ref{rem:VirtualBraids}, this is equivalent to showing that the virtual pure braid group $\vPn$ is isomorphic to its quotient under non-monotone transformations.  
Recall that the analogous maps $\Pn\to\SLn$ and $\wPn\to\wSLn$ for usual and welded objetcs are both injective. 
It is also known that $\Pn$ embeds in $\wSLn$ (see e.g. \cite{vaskho}), which implies that $\Pn$ embeds in $\vSLn$ as well.  

\begin{question} 
Are welded pure braids the only invertibles in $\wSLn$ ? Are they in $\wSLhn$ ? 
Likewise, are virtual pure braids the only invertibles in $\vSLn$, $\vSLhn$ and $\vSLHn$ ? 
\end{question}
\noindent This is a natural question in view of the usual case, where pure braids form the group of units in $\SLn$, as shown in \cite{HL2}. 

\begin{question} 
Is the map $\vPH_n\hookrightarrow \vSLH_n$ surjective ? In other words, is any virtual string link monotone up to self--virtualization ? 
\end{question}
\noindent By Theorem \ref{th:wSLh=wPh}, the answer is affirmative in the welded case. 
Also, as stated above, we have $\vSLH_2\cong \wSLH_2\cong \Z^2$ by \cite[Prop.~4.11]{vaskho}, hence an affirmative answer in the $2$-strand case. 
In the general case, however, the answer seems likely to be negative; for example, the virtual string link depicted below is a good candidate for a counter-example, although the techniques used in the present paper do not apply:
\[
\dessin{3cm}{NonBraid2}.
\]

\begin{question} 
Does $\vPh_2$ coincide with $\vP_2$ ?  Does $\wPh_2$ coincide with $\wP_2$ ?
\end{question}
 \noindent  This question is motivated by the usual case, where $\P_2$ is known to coincide with $\Ph_2$. 
 This follows from the fact that the automorphism of $\RF_2$ associated to the generator of $\Ph_2$ 
 (which is the image of the generator of $\P_2=\Z$) has no finite order.  \\

Finally, recall from Remark \ref{rem:topo} that one of the main features of welded knotted objects is that they are realized topologically as ribbon $2$-knotted objects in $4$-space, 
via Satoh's Tube map. Although less central in the present note, the following question seems worth adding. 
\begin{question} 
Is the Tube map, from welded string links to ribbon tubes \cite{vaskho}, injective ?
\end{question}
\noindent This question is in general open, see \cite{WKO1,WKO2}. 
It is true when restricting to welded braids by \cite{BH} and to string links up to self-virtualization \cite{vaskho}, but fails in the case of welded knots \cite{IK}.  
        
\bibliographystyle{abbrv}
\bibliography{wSL}

\end{document}